
\documentclass[a4paper,12pt]{amsart}
\usepackage{latexsym,amsmath,amssymb}

\setcounter{MaxMatrixCols}{10}

\newtheorem{thm}{Theorem}[section]
\newtheorem{cor}[thm]{Corollary}
\newtheorem{lem}[thm]{Lemma}

\theoremstyle{definition}
\newtheorem{defn}[thm]{Definition}
\theoremstyle{remark}

\numberwithin{equation}{section}

\begin{document}
\title[]{\textsc{Essential norm of composition operators harmonic Bloch spaces }}
\author{\textsc{ Y. Estaremi, S. Esmaeili and A. Ebadian }}
\address{\textsc{ Y. estaremi, S. Esmaeili and A. Ebadian}} \email{estaremi@gmail.com},\email{dr.somaye.esmaili@gmail.com},\email{ ebadian.ali@gmail.com.}

\address{Department of mathematics, Payame Noor university , P. O. Box: 19395-3697, Tehran,
Iran.\\
}

\thanks{}
\thanks{}
\subjclass[2010]{47B33}
\keywords{Composition operator, Harmonic Bloch spaces, Harmonic function, essential norm.}
\date{}
\dedicatory{}

\begin{abstract}
In this paper we characterize essential norm of composition operators on the spaces of Harmonic Bloch functions. These results extends the similar results that were proven for composition operators on Bloch spaces.
\end{abstract}

\maketitle

\commby{}

\section{\textsc{Introduction}}

 The essential norm $\|T\|_{e}$ of a continuous linear operator $T$ between Banach spaces $X$ and $Y$ is defined as the distance from $T$ to the space of compact operators from $X$ to $Y$.
 The essential norm of composition operators have been studied on analytic function spaces in \cite{sh,cp,dr,mor,mon,ch,mz,cm}.

 Let $D$ be the open unit disk in the complex plane. Let $\varphi$ be an analytic self-map of $D$, i. e., an analytic function $\varphi$ in $D$ such that $\varphi(D)\subset D$. The composition operator $C_{\varphi}$ induced by such $\varphi$ is the linear map on the spaces of all harmonic functions on the unit disk defined by
$$C_{\varphi}f=fo\varphi.$$

 The main goal of this paper is to compute the essential norm of $C_{\varphi}$ in terms of an asymptotic bound involving the quantity
$$\frac{(1-|z|^2)^\alpha}{(1-|\varphi(z)|^2)^\alpha}|\varphi^{'}(z)|.$$
Further, we obtain an other essential norm formula for composition operators on $HB(\alpha)$ for any $0<\alpha<\infty$ in terms of $\varphi^{n}$, where, $\varphi^{n}$ means the $n$-th power of $\varphi$.

Let $D$ be the open unit disk in the complex plane. For a continuously differentiable complex-valued $f(z)=u(z)+i\upsilon(z),$ $z=x+iy,$ we use the common notation for its formal derivatives:
$$f_{z}=\frac{1}{2}(f_{x}-if_{y}),$$  $$f_{\bar{z}}=\frac{1}{2}(f_{x}+if_{y}).$$

 A twice continuously differentiable complex-valued function $f=u+i\upsilon$ on $D$ is called a harmonic function if and only if the real-valued function $u$ and $\upsilon$ satisfy Laplace's equation $\Delta u=\Delta \upsilon=0$.

A direct calculation shows that the Laplacian of $f$ is
$$\Delta f=4f_{z\bar{z}}.$$
Thus for functions $f$ with continuous second partial derivatives, it is clear that $f$ is harmonic if ana only if $\Delta f=0.$
We consider complex-valued harmonic function $f$ defined in a simply connected domain $D\subset C.$ The function $f$ has a canonical decomposition $f=h+\bar{g},$ where $h$ and $g$ are analytic in $D$ \cite{dp}.
A planar complex-valued harmonic function $f$ in $D$ is called a harmonic Bloch function if and only if

$$\beta_{f}=\sup_{z,w\in D,z\neq w}\frac{|f(z)-f(w)|}{\varrho(z,w)}<\infty.$$

Here $\beta_{f}$ is the Lipschitz number of $f$ and

\begin{align*}
\varrho(z,w)=\arctan h|\frac{z-w}{1-\bar{z}w}|,
\end{align*}

denotes the hyperbolic distance between $z$ and $w$ in $D$, where here $\rho(z,w)$ is the pseudo-hyperbolic distance on $D$.
In \cite{cf} Colonna proved that
\begin{align*}
\beta_{f}=\sup_{z\in D}(1-|z|^2)[|f_{z}(z)|+|f_{\bar{z}}(z)|].
\end{align*}
Moreover, the set of all harmonic Bloch mappings, denoted by the symbol $HB(1)$ or $HB$, forms a complex Banach space with the norm $\|.\|$ given by
$$\|f\|_{HB(1)}=|f(0)|+\sup_{z\in D}(1-|z|^2)[|f_{z}(z)|+|f_{\bar{z}}(z)|].$$
Now we define the Harmonic $\alpha$-Bloch space $HB(\alpha)$.
 \begin{defn} For $\alpha\in(0,\infty)$, the Harmonic $\alpha$-Bloch space $HB(\alpha)$ consists of complex-valued harmonic function $f$ defined on $D$ such that
$$|||f|||_{HB(\alpha)}=\sup_{z\in D}(1-|z|^2)^\alpha[|f_{z}(z)|+|f_{\bar{z}}(z)|]<\infty,$$
and the harmonic little $\alpha$-Bloch space $HB_{0}(\alpha)$ consists of all function in $HB(\alpha)$ such that
$$\lim_{|z|\rightarrow1}(1-|z|^2)^\alpha[|f_{z}(z)|+|f_{\bar{z}}(z)|]=0.$$
\end{defn}
Obviously, when $\alpha=1$, we have $|||f|||_{HB(\alpha)}=\beta_{f}$. Each $HB(\alpha)$ is a Banach space with the norm given by
\begin{align*}
\|f\|_{HB(\alpha)}&=|f(0)|+\sup_{z\in D}(1-|z|^2)^\alpha[|f_{z}(z)|+|f_{\bar{z}}(z)|],
\end{align*}
and $HB_{0}(\alpha)$ is a closed subspace of $HB(\alpha)$.

%
%
\section{\textsc{Main results}}
In this section we characterize essential norm of composition operator $C_{\varphi}$ on $HB(\alpha)$.

Here we recall the next lemma from \cite{zr} that we need it in the sequel.
\begin{lem}\cite{zr}\label{l1} If $\alpha>0$,$n\in N$, $0<x<1$ and
$$H_{n,\alpha}(x)=x^{n-1}(1-x^{2})^{\alpha},$$
then $H_{n,\alpha}$ has the following properties:

a)
$$\max_{0\leq x\leq1}H_{n,\alpha}(x)=$$
$$H_{n,\alpha}(r_{n})=\left\{
   \begin{array}{ll}
     1 , & \hbox{n=1;} \\$$

     $$(\frac{2\alpha}{n-1+2\alpha})^{\alpha} $$
     $$(\frac{n-1}{n-1+2\alpha})^{\frac{(n-1)}{2}},$$
     $$& n\geq2.$$

    $$\end{array}
  \right.$$
In which
$$r_{n}=\left\{
   \begin{array}{ll}
     0 , & \hbox{n=1;} \\$$

     $$(\frac{n-1}{n-1+2\alpha})^{\frac{1}{2}},$$
     $$& n\geq2.$$

    $$\end{array}
  \right.$$

b) For $n\geq 1,H_{n,\alpha}$ is increasing on $[0,r_{n}]$ and decreasing on $[r_{n},1]$.\\

c) For $n\geq 1,H_{n,\alpha}$ is decreasing on $[r_{n},r_{n+1}],$ and so
$$\min_{x\in[r_{n},r_{n+1}]}H_{n,\alpha}(x)=H_{n,\alpha}(r_{n+1})=(\frac{2\alpha}{n+2\alpha})^{\alpha}(\frac{n}{n+2\alpha})^{\frac{(n-1)}{2}}.$$
Consequently,
$$\lim_{n\rightarrow\alpha}n^{\alpha}\min_{x\in[r_{n},r_{n+1}]}H_{n,\alpha}(x)=(\frac{2\alpha}{e})^{\alpha}.$$
\end{lem}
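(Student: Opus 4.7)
The statement is a standard calculus exercise on the one-variable function $H_{n,\alpha}(x) = x^{n-1}(1-x^2)^{\alpha}$, so my plan is to differentiate, locate the unique interior critical point, and then read off monotonicity, extrema, and the limit in that order.

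For parts (a) and (b), I would compute
\begin{equation*}
H'_{n,\alpha}(x)=x^{n-2}(1-x^{2})^{\alpha-1}\bigl[(n-1)(1-x^{2})-2\alpha x^{2}\bigr]=x^{n-2}(1-x^{2})^{\alpha-1}\bigl[(n-1)-(n-1+2\alpha)x^{2}\bigr].
\end{equation*}
For $n=1$ the bracket reduces to $-2\alpha x^{2}\leq 0$, so $H_{1,\alpha}$ is decreasing on $[0,1]$ with maximum $1$ at $x=r_{1}=0$. For $n\geq 2$ the factor $x^{n-2}(1-x^{2})^{\alpha-1}$ is nonnegative on $(0,1)$, so the sign of $H'_{n,\alpha}(x)$ is the sign of the bracket, which vanishes exactly at $x^{2}=\frac{n-1}{n-1+2\alpha}$, i.e.\ at $x=r_{n}$. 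The bracket is positive for $x<r_{n}$ and negative for $x>r_{n}$, giving part (b). Substituting $r_{n}^{2}=\frac{n-1}{n-1+2\alpha}$ and $1-r_{n}^{2}=\frac{2\alpha}{n-1+2\alpha}$ into $H_{n,\alpha}(r_{n})=r_{n}^{n-1}(1-r_{n}^{2})^{\alpha}$ produces the closed form claimed in (a).

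For part (c), I first verify that $r_{n}<r_{n+1}$: writing $r_{n}^{2}=1-\frac{2\alpha}{n-1+2\alpha}$ shows that $r_{n}^{2}$ is strictly increasing in $n$ and stays in $[0,1)$, so the interval $[r_{n},r_{n+1}]\subset[r_{n},1]$ is nondegenerate. By part (b), $H_{n,\alpha}$ is decreasing on $[r_{n},1]$, so on $[r_{n},r_{n+1}]$ its minimum is attained at $r_{n+1}$. Substituting $r_{n+1}^{2}=\frac{n}{n+2\alpha}$ and $1-r_{n+1}^{2}=\frac{2\alpha}{n+2\alpha}$ into $H_{n,\alpha}(r_{n+1})=r_{n+1}^{n-1}(1-r_{n+1}^{2})^{\alpha}$ yields the stated formula.

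The only step that needs a little care is the limit. I would rearrange
\begin{equation*}
n^{\alpha}H_{n,\alpha}(r_{n+1})=\left(\frac{2\alpha n}{n+2\alpha}\right)^{\alpha}\left(\frac{n}{n+2\alpha}\right)^{(n-1)/2}=\left(\frac{2\alpha}{1+2\alpha/n}\right)^{\alpha}\bigl(1+2\alpha/n\bigr)^{-(n-1)/2}.
\end{equation*}
The first factor tends to $(2\alpha)^{\alpha}$, while for the second I use $(1+2\alpha/n)^{n}\to e^{2\alpha}$, hence $(1+2\alpha/n)^{-(n-1)/2}\to e^{-\alpha}$. Multiplying gives $(2\alpha/e)^{\alpha}$, as claimed. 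The only conceptual obstacle is noticing $r_{n}<r_{n+1}$ so that (c) is a genuine restriction of (b); everything else is routine calculus and a standard exponential limit.
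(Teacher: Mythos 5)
Your proof is correct. Note that the paper itself does not prove this lemma at all --- it is imported verbatim from the cited reference \cite{zr} --- so there is no in-paper argument to compare against; your elementary calculus derivation is the standard one and it checks out. The factorization $H'_{n,\alpha}(x)=x^{n-2}(1-x^{2})^{\alpha-1}\bigl[(n-1)-(n-1+2\alpha)x^{2}\bigr]$ is right, the sign analysis gives (b), substitution of $r_{n}^{2}=\frac{n-1}{n-1+2\alpha}$ gives (a), the observation that $r_{n}^{2}=1-\frac{2\alpha}{n-1+2\alpha}$ is increasing justifies $[r_{n},r_{n+1}]\subset[r_{n},1]$ for (c), and the limit computation via $(1+2\alpha/n)^{-(n-1)/2}\to e^{-\alpha}$ is correct (the statement's ``$\lim_{n\to\alpha}$'' is a typo for $n\to\infty$, which you have implicitly and correctly repaired). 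Two cosmetic points: for $n=1$ the prefactor $x^{n-2}=x^{-1}$ is singular at the origin, so it is cleaner to compute $H'_{1,\alpha}(x)=-2\alpha x(1-x^{2})^{\alpha-1}\le 0$ directly rather than reading the sign off the factored form; and when $0<\alpha<1$ the factor $(1-x^{2})^{\alpha-1}$ blows up as $x\to 1^{-}$, but since it is positive there this does not affect the sign argument. Neither point is a gap.
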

By using the Lemma \ref{l1} we can find a lower bound for the essential norm of $C_{\varphi}:HB(\alpha)\rightarrow HB(\alpha)$.
\begin{thm}\label{t12}
Let $0< \alpha < \infty$, $\varphi$ be an analytic self-map of the unit disk $D$ and $C_{\varphi}$ be a bounded operator from $HB(\alpha)$ into $HB(\alpha)$. Then
$$\|C_{\varphi}\|_{e}\geq \lim_{s\rightarrow 1}\sup _{|\varphi(z)|> s}\frac{(1-|z|^{2})^\alpha}{(1-|\varphi(z)|^{2})^\alpha} |\varphi^{'}(z)|.$$
\end{thm}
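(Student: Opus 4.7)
The plan is to use the characterization $\|C_\varphi\|_e=\inf_K\|C_\varphi-K\|$, with the infimum over compact operators $K\colon HB(\alpha)\to HB(\alpha)$, and, for each such $K$, to produce a bounded weakly null sequence $\{f_n\}\subset HB(\alpha)$ for which $\|C_\varphi f_n\|_{HB(\alpha)}$ tends to the asserted right-hand side $L$. Since $K$ is compact and $f_n\to 0$ weakly, $\|Kf_n\|_{HB(\alpha)}\to 0$, and from the triangle inequality
$$\|C_\varphi-K\|\cdot\|f_n\|_{HB(\alpha)}\ge \|C_\varphi f_n\|_{HB(\alpha)}-\|Kf_n\|_{HB(\alpha)}$$
one will obtain $\|C_\varphi-K\|\ge L$, which gives the claim after taking the infimum over $K$.

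By definition of $L$, pick $\{z_n\}\subset D$ with $|\varphi(z_n)|\to 1$ and
$$\frac{(1-|z_n|^2)^\alpha}{(1-|\varphi(z_n)|^2)^\alpha}|\varphi'(z_n)|\longrightarrow L,$$
and set $w_n=\varphi(z_n)$. The central step is the construction of analytic test functions $f_n$ (which are automatically in $HB(\alpha)$) with (i) $\|f_n\|_{HB(\alpha)}\to 1$, (ii) $(1-|w_n|^2)^\alpha|f_n'(w_n)|\to 1$, and (iii) $f_n\to 0$ uniformly on compact subsets of $D$. A natural template, valid when $\alpha>\tfrac12$, is
$$f_n(z)=\frac{1}{2\alpha-1}\cdot\frac{(1-|w_n|^2)^\alpha}{(1-\overline{w_n}z)^{2\alpha-1}},$$
whose uniform Bloch norm bound follows from the Schwarz--Pick inequality $|1-\overline{w}z|^2\ge (1-|w|^2)(1-|z|^2)$; the remaining range $0<\alpha\le\tfrac12$ is handled by an analogous family with a larger exponent and a compensating power of $1-|w_n|^2$.

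Because both $f_n$ and $\varphi$ are analytic, $(f_n\circ\varphi)_{\bar z}\equiv 0$, so the harmonic Bloch seminorm of $C_\varphi f_n$ collapses to the classical expression $(1-|z|^2)^\alpha|f_n'(\varphi(z))||\varphi'(z)|$. Evaluating at $z=z_n$ and factoring,
$$\|C_\varphi f_n\|_{HB(\alpha)}\ge \frac{(1-|z_n|^2)^\alpha}{(1-|w_n|^2)^\alpha}|\varphi'(z_n)|\cdot(1-|w_n|^2)^\alpha|f_n'(w_n)|\longrightarrow L.$$
Uniform boundedness of $\{f_n\}$ in $HB(\alpha)$ together with uniform convergence to $0$ on compacta implies weak convergence to $0$, hence $\|Kf_n\|_{HB(\alpha)}\to 0$ for the compact operator $K$, and the argument closes as announced in the first paragraph.

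The main obstacle is calibrating the test functions so that $\|f_n\|_{HB(\alpha)}\to 1$ with the sharp constant; any weaker estimate would cost a multiplicative factor in the final inequality. The Schwarz--Pick bound delivers this calibration cleanly when $2\alpha-1>0$, but for $0<\alpha\le\tfrac12$ one must replace the elementary kernel above by a suitably iterated or antidifferentiated variant and verify the asymptotic normalization by hand. Apart from this calibration, everything is a direct transcription of the classical analytic Bloch essential-norm argument to the harmonic setting, which is straightforward precisely because the natural test functions are themselves analytic and therefore feel only the $f_z$-part of the $HB(\alpha)$ seminorm.
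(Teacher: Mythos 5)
Your argument is correct in outline and reaches the stated lower bound, but it takes a genuinely different route from the paper. The paper does not localize at a sequence of points $w_n=\varphi(z_n)$ with kernel-type test functions; instead it uses the single global family $f_n=(z^n+\bar z^n)/\|z^n+\bar z^n\|_{HB(\alpha)}$, computes $\|z^n+\bar z^n\|_{HB(\alpha)}$ exactly via Lemma~\ref{l1} (the function $H_{n,\alpha}(x)=x^{n-1}(1-x^2)^\alpha$ maximized at $r_n$), and then restricts attention to the annuli $D_n=\{z: r_n\le|\varphi(z)|\le r_{n+1}\}$, where the key point is that $\min_{w\in D_n}(1-|w|^2)^\alpha\bigl[|h_n'(w)|+|g_n'(w)|\bigr]\to 1$; the supremum over $\varphi(z)\in D_n$ then converges to the $\lim_{s\to1}\sup_{|\varphi(z)|>s}$ quantity. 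The trade-off: the monomial approach needs no case distinction in $\alpha$ and feeds directly into the later formula in terms of $\|\varphi^n+\bar\varphi^n\|_{HB(\alpha)}$ (Theorem~\ref{t16}), while your kernel approach is more local and elementary, and the Schwarz--Pick identity $|1-\bar w z|^2=(1-|w|^2)(1-|z|^2)+|z-w|^2$ does give the sharp normalization $\|f_n\|_{HB(\alpha)}\to1$ with $(1-|w_n|^2)^\alpha|f_n'(w_n)|=|w_n|\to1$. Both proofs rely on the same unproved (but standard) assertion that a bounded sequence tending to $0$ uniformly on compacta is weakly null in $HB(\alpha)$, so you are on equal footing with the paper there.

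One small correction to your caveat: your formula $f_n(z)=\frac{1}{2\alpha-1}(1-|w_n|^2)^\alpha(1-\overline{w_n}z)^{1-2\alpha}$ in fact works verbatim for every $\alpha\neq\tfrac12$, since in all cases $f_n'(z)=\overline{w_n}(1-|w_n|^2)^\alpha(1-\overline{w_n}z)^{-2\alpha}$ and the same Schwarz--Pick bound applies (the principal branch is well defined because $\mathrm{Re}(1-\overline{w_n}z)>0$). Only $\alpha=\tfrac12$ genuinely requires the logarithmic antiderivative $f_n(z)=-\overline{w_n}^{-1}(1-|w_n|^2)^{1/2}\log(1-\overline{w_n}z)$, for which the normalization $\|f_n\|_{HB(1/2)}\to1$ should be checked separately; with that single case supplied, your proof is complete.
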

\begin{proof} Let $n\in \mathbb{N}$ and consider the function $z^{n}+\bar{z}^{n}$. By the Lemma \ref{l1} we have
\begin{align*}
\|z^{n}+\bar{z}^{n}\|_{HB(\alpha)}&=\max_{z\in D}2n |z|^{n-1}(1-|z|^2)^{\alpha}\\
&=2n(\frac{2\alpha}{n-1+2\alpha})^{\alpha}(\frac{n-1}{n-1+2\alpha})^{\frac{(n-1)}{2}},
\end{align*}
in which the maximum is attained at any point on the circle with radius $r_{n}=(\frac{n-1}{n-1+2\alpha})^{\frac{(n-1)}{2}}$.

Let$f_{n}=\frac{z^{n}+\bar{z}^{n}}{\|z^{n}+\bar{z}^{n}\|_{HB(\alpha)}}$. Then $f_{n}$ converges to $0$ weakly in $HB(\alpha)$. In particular, if $K$ is any compact operator on $HB(\alpha)$, then $\lim_{n\rightarrow\infty}\|Kf_{n}\|_{HB(\alpha)}=0$. For
$$D_{n}=\{z\in D : r_{n}\leq |\varphi(z)|\leq r_{n+1}\},$$
we have
\begin{align*}
\min_{z\in D_{n}}(1-|z|^2)^{\alpha}[|(f_{n})_{z}(z)|+|(f_{n})_{\bar{z}}(z)|]&=(1-|r_{n+1}|^{2})^{\alpha}[|h_{n}'(z)|+|g_{n}'(z)|]\\
&=(\frac{n-1+2\alpha}{n+2\alpha})^{\alpha}(\frac{n^{2}(2\alpha-1)n}{n^{2}(2\alpha-1)n-2\alpha}).
\end{align*}
It is easy to see that this minimum tends to $1$ when $n\rightarrow\infty$. Also for each $n\geq2$ the minimum  is attained at any point of the circle centered at the origin and of radius $r_{n+1}$. For any compact operator $K$ on $HB(\alpha)$, we get that
\begin{align*}
\|C_{\varphi}-K\|&\geq \limsup_{n\rightarrow\infty}\|(C_{\varphi}-K)f_{n}\|_{HB(\alpha)}\\
&\geq\limsup_{n\rightarrow\infty}(\|(C_{\varphi})f_{n}\|_{HB(\alpha)}-\|(Kf_{n}\|_{HB(\alpha)})\\
&=\limsup_{n\rightarrow\infty}\|(C_{\varphi})f_{n}\|_{HB(\alpha)}.
\end{align*}
Thus we obtain the followings:
\begin{align*}
\|C_{\varphi}\|_{e}&=\inf_{K}\|C_{\varphi}-K\|\\
&\geq\limsup_{n\rightarrow\infty}\|(C_{\varphi})f_{n}\|_{HB(\alpha)}\\
&=\limsup_{n\rightarrow\infty}\sup_{z\in D}\frac{(1-|z|^2)^\alpha}{(1-|\varphi(z)|^2)^\alpha}|\varphi^{'}(z)|(1-|\varphi(z)|^2)^\alpha [|h_{n}^{'}(\varphi(z))|+|g_{n}^{'}(\varphi(z))|]\\
&\geq\lim_{n\rightarrow\infty}\sup_{\varphi(z)\in D_{n}}\frac{(1-|z|^2)^\alpha}{(1-|\varphi(z)|^2)^\alpha}|\varphi^{'}(z)|\min_{\varphi(z)\in D_{n}}(1-|\varphi(z)|^2)^\alpha [|h_{n}^{'}(\varphi(z))|+|g_{n}^{'}(\varphi(z))|]\\
&\geq\lim_{n\rightarrow\infty}\sup_{\varphi(z)\in D_{n}}\frac{(1-|z|^2)^\alpha}{(1-|\varphi(z)|^2)^\alpha}|\varphi'(z)|\min_{w\in D_{n}}(1-|w|^2)^\alpha [|h_{n}^{'}(w)|+|g_{n}^{'}(w)|],
\end{align*}
Since
$$\limsup_{n\rightarrow\infty}\min_{w \in D_{n}}(1-|w|^2)^\alpha [|h_{n}^{'}(w)|+|g_{n}^{'}(w)|]=1,$$
then we conclude that
$$\|C_{\varphi}\|_{e}\geq \lim_{n\rightarrow\infty}\sup_{\varphi(z)\in D_{n}}\frac{(1-|z|^2)^\alpha}{(1-|\varphi(z)|^2)^\alpha}|\varphi^{'}(z)|.$$
\end{proof}

We need some Lemmas to obtain some upper bounds for the essential norm of $C_{\varphi}$. For $r\in(0,1)$, let $K_{r}f(z)=f(rz)$. Then by Theorems 2. 6 and 2.7 of \cite{eee} we get that $K_{r}$ is a compact operator on the spaces $HB(\alpha)$ and $HB_{0}(\alpha)$ for any positive number $\alpha$, with $\|K_{r}\|\leq1$.

\begin{lem}\label{l2}
 For $0<\alpha<1$, there exists a sequence $\{r_{k}\}$, $0<r_{k}<1$, tending to $1$, such that the compact operator $L_{n}=(\frac{1}{n})\sum_{k=1}^{n}K_{r_{k}}$ on $HB_{0}(\alpha)$ satisfies:

 a) For every $t\in[0,1)$ we have
 $$\lim_{n\rightarrow\infty} \sup_{\|f\|_{HB(\alpha)}\leq1} \sup_{|z|\leq t}[|((I-L_{n})f)_{z}(z)|+|((I-L_{n})f)_{\bar{z}}(z)|]=0.$$

b) $$\lim_{n\rightarrow\infty} \sup_{\|f\|_{HB(\alpha)}\leq1} \sup_{z\in D}|(I-L_{n})f(z)|=0.$$

c) $$\limsup_{n\rightarrow\infty}\|I-L_{n}\|\leq1.$$
\end{lem}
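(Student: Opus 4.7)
The plan is to adapt the Cesaro averaging technique of Montes-Rodriguez for the analytic Bloch space to the harmonic setting, using the canonical decomposition $f = h + \overline{g}$ so that the derivatives $f_{z} = h'$ and $f_{\bar z} = \overline{g'}$ admit the usual complex-analytic estimates. I would fix a sequence $r_{k} \in (0,1)$ with $r_{k} \uparrow 1$ (for instance $r_{k} = 1 - 1/k$, with the exact rate adjusted to secure (c)). Since each $K_{r_{k}}$ is compact on $HB_{0}(\alpha)$ with $\|K_{r_{k}}\| \leq 1$ by the cited Theorems~2.6--2.7 of \cite{eee}, the operator $L_{n}$ is a finite convex combination of compact contractions, and hence compact with $\|L_{n}\| \leq 1$; moreover $(L_{n}f)(0) = f(0)$.

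For part~(a), I would fix $t \in [0,1)$ and $t' \in (t,1)$. For $f$ in the unit ball of $HB(\alpha)$, the defining bound gives $|h'(w)|, |g'(w)| \leq (1-t'^{2})^{-\alpha}$ for $|w| \leq t'$, and Cauchy estimates on $h'', g''$ then yield uniform equicontinuity of $h'$ and $g'$ on $\{|z| \leq t\}$ as $f$ ranges over the unit ball. Applied to the identity
\begin{align*}
\bigl((I - K_{r})f\bigr)_{z}(z) = (1-r)\,h'(z) + r\,\bigl(h'(z) - h'(rz)\bigr)
\end{align*}
and its analogue for $f_{\bar z}$, this shows that a single dilation satisfies (a) as $r \to 1^{-}$. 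For the Cesaro average, I would split $\{1,\ldots,n\}$ into a fixed head $k \leq K$ (contributing $O(K/n)$ by the uniform derivative bound) and a tail $k > K$ (each term already within $\varepsilon$), then let $n \to \infty$ before $K \to \infty$.

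For part~(b), the hypothesis $\alpha < 1$ permits a direct radial integration: for any $z \in D$ and any $f$ with $\|f\|_{HB(\alpha)} \leq 1$, the chain rule and the pointwise bound $|h'(w)| + |g'(w)| \leq (1-|w|^{2})^{-\alpha}$ give
\begin{align*}
|f(z) - f(rz)| \leq |z|\int_{r}^{1}\bigl(|h'(sz)| + |g'(sz)|\bigr)\,ds \leq \int_{r}^{1}(1-s)^{-\alpha}\,ds = \frac{(1-r)^{1-\alpha}}{1-\alpha},
\end{align*}
which is independent of $z$ and of $f$. Averaging and applying the triangle inequality yields (b).

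Part~(c) is the main obstacle. Since $(I - L_{n})f(0) = 0$, the norm equals $\sup_{z}(1-|z|^{2})^{\alpha}\bigl(|((I-L_{n})f)_{z}(z)| + |((I-L_{n})f)_{\bar z}(z)|\bigr)$, and I need this to be $\leq 1 + o_{n}(1)$ uniformly over $f$ in the unit ball. Splitting the supremum at $|z| = R$, the part over $|z| \leq R$ is controlled by (a) and contributes $o_{n}(1)$. On the annulus $|z| > R$, a naive triangle inequality only gives $2$, since both $(1-|z|^{2})^{\alpha}|h'(z)|$ and the weighted average $(1/n)\sum_{k}r_{k}\bigl((1-|z|^{2})/(1-r_{k}^{2}|z|^{2})\bigr)^{\alpha}\|f\|$ can each approach $\|f\|$ separately. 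The improvement to $1$ must come from the fact that these two quantities cannot both be large at the same $z$: either $|z|$ is close to $1$, where the dilation weights $\bigl((1-|z|^{2})/(1-r_{k}^{2}|z|^{2})\bigr)^{\alpha}$ average to $o(1)$ for a suitably sparse $\{r_{k}\}$, or $|z|$ is moderate, where the Cesaro sum tracks $h'(z)$ closely by (a) and the two terms nearly cancel. The crux is a careful choice of $\{r_{k}\}$ that balances these two regimes uniformly; once in hand, taking $R \to 1^{-}$ after $n \to \infty$ delivers $\limsup_{n}\|I - L_{n}\| \leq 1$.
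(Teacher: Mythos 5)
Parts (a) and (b) of your proposal are sound: the equicontinuity/normal-families argument for (a) matches the paper's use of the relative compactness of the unit ball in the topology of uniform convergence on compacta, and your radial-integration bound $|f(z)-f(rz)|\leq \int_r^1(1-s)^{-\alpha}\,ds=(1-r)^{1-\alpha}/(1-\alpha)$ is a clean, self-contained proof of (b) for $\alpha<1$ (the paper simply cites Lemma 1 of \cite{mon} for this point). The problem is part (c), which you correctly identify as the crux and then do not prove. Your closing sentence --- ``the crux is a careful choice of $\{r_k\}$ that balances these two regimes uniformly; once in hand, \dots'' --- is exactly the step that cannot be waved away, and the heuristic you offer for it is not the right mechanism: for $|z|$ close to $1$ the dilation weights $\bigl((1-|z|^2)/(1-r_k^2|z|^2)\bigr)^{\alpha}$ do \emph{not} average to $o(1)$ over $k=1,\dots,n$, since the terms with $r_k$ close to $1$ are themselves close to $1$; and ``near cancellation'' of $h'(z)$ against the average of $h'(r_kz)$ at moderate $|z|$ is not what drives the estimate either.

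The missing idea is an interlaced inductive construction of two sequences $s_1<s_2<\cdots\to 1$ and $r_1,r_2,\dots$ with the following two properties, both uniform over the unit ball of $HB_0(\alpha)$: (i) $(1-|z|^2)^{\alpha}\bigl[|((I-K_{r_k})f)_z(z)|+|((I-K_{r_k})f)_{\bar z}(z)|\bigr]<\varepsilon$ for $|z|\leq s_k$ (local uniform convergence of dilations), and (ii) the same quantity is $<1+\varepsilon$ for $|z|>s_{k+1}$, because the compactness of $K_{r_k}$ lets you choose $s_{k+1}$ so large that the $K_{r_k}f$ contribution alone is $<\varepsilon$ beyond $s_{k+1}$, leaving only the identity's contribution, which is $\leq\|f\|\leq 1$. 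With this interlacing, for any fixed $z$ lying in the annulus $s_l<|z|\leq s_{l+1}$, every summand of $\frac{1}{n}\sum_{k=1}^n$ with $k\neq l$ is $\leq 1+\varepsilon$ (by (i) if $k>l$, by (ii) if $k<l$), and only the single summand $k=l$ needs the trivial bound $\leq\|I-K_{r_l}\|\leq 2$; hence the Cesaro average is at most $\frac{n-1}{n}(1+\varepsilon)+\frac{2}{n}<1+2\varepsilon$ once $n>1/\varepsilon$. This ``at most one bad index per point'' argument is the entire content of (c), and without it your proof is incomplete.
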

\begin{proof}
In order to prove the Lemma we need to prove that, for any $t$, $0<t<1$, and any $\varepsilon>0$, there is an $n>0$ such that, for any $n>N,$

$$\sup_{\|f\|_{HB(\alpha)}\leq1} \sup_{|z|\leq t}[|((I-L_{n})f)_{z}(z)|+|((I-L_{n})f)_{\bar{z}}(z)|]<\varepsilon,$$
$$\sup_{\|f\|_{HB(\alpha)}\leq1} \sup_{z\in D}|(I-L_{n})f(z)|<\varepsilon,$$
and
$$\|I-L_{n}\|\leq1+2\varepsilon.$$
Let $S_{HB_{0}(\alpha)}$ denote the unit ball of $HB_{0}(\alpha)$ and the positive number $s_{1}$ such that $t<s_{1}<1$. Since $S_{HB_{0}(\alpha)}$ is a relatively compact subset with respect to the topology $\tau$ of the uniform convergence on compact subsets of the unit disk and $(I-K_{r})f$ tends to $0$ with respect to $\tau$, then $((I-K_{r})f)^{'}$ tends to $0$. And so there is $0<r_{1}<1$ such that
\begin{align*}
&\sup_{\|f\|_{HB(\alpha)}\leq1} \sup_{|z|\leq s_{1}}(1-|z|^2)^\alpha[|((I-K_{r_{1}})f)_{z}(z)|+|((I-K_{r_{1}})f)_{\bar{z}}(z)|]\\
&<\min\{\varepsilon,\varepsilon(1-|s_{1}|^2)^\alpha\}.
\end{align*}
Since $K_{r_{1}}$ is compact, then $K_{r_{1}}S_{HB_{0}(\alpha)}$ is relatively compact and we can find $s_{2}>s_{1}$ such that

$$\sup_{\|f\|_{HB(\alpha)}\leq1} \sup_{|z|> s_{2}}(1-|z|^2)^\alpha[|(K_{r_{1}}f)_{z}(z)|+|(K_{r_{1}}f)_{\bar{z}}(z)|]<\varepsilon.$$

And so we get that
$$\sup_{\|f\|_{HB(\alpha)}\leq1} \sup_{|z|> s_{2}}(1-|z|^2)^\alpha[|((I-K_{r_{1}})f)_{z}(z)|+|((I-K_{r_{1}})f)_{\bar{z}}(z)|]<1+\varepsilon.$$

By repeating this method we can find the sequences $\{s_{k}\}$ and $\{r_{k}\}$ satisfying
\begin{align*}
&\sup_{\|f\|_{HB(\alpha)}\leq1} \sup_{|z|\leq s_{k}}(1-|z|^2)^\alpha[|((I-K_{r_{k}})f)_{z}(z)|+|((I-K_{r_{k}})f)_{\bar{z}}(z)|]\\
&<\min\{\varepsilon,\varepsilon(1-|s_{1}|^2)^\alpha\}
\end{align*}
and
$$\sup_{\|f\|_{HB(\alpha)}\leq1} \sup_{|z|> s_{k+1}}(1-|z|^2)^\alpha[|((I-K_{r_{k}})f)_{z}(z)|+|((I-K_{r_{k}})f)_{\bar{z}}(z)|]<1+\varepsilon.$$
If we take $n>\frac{1}{\varepsilon}$ and set $L_{n}=(\frac{1}{n})\sum_{k=1}^{n}K_{r_{k}}$, the by using the fact that $(1-|s_{1}|^2)^\alpha\}\leq(1-|z|^2)^\alpha$ for $|z|\leq s_{1}$, we have
\begin{align*}
&\sup_{\|f\|_{HB(\alpha)}\leq1} \sup_{|z|\leq t}[|((I-L_{n})f)_{z}(z)|+|((I-L_{n})f)_{\bar{z}}(z)|]\\
&\leq\frac{1}{n}\sum_{k=1}^{n}\sup_{\|f\|_{HB(\alpha)}\leq1} \sup_{|z|\leq s_{1}}[|((I-K_{r_{k}})f)_{z}(z)|+|((I-K_{r_{k}})f)_{\bar{z}}(z)|]\\
&<\varepsilon.
\end{align*}
This gives (a). Now we prove (c). We know that for $n\geq1$ and $0<\alpha<1$,
\begin{align*}
\|I-L_{n}\|&=\sup_{\|f\|_{HB(\alpha)}\leq1}\|(I-L_{n})f\|_{HB(\alpha)}\\
&=\sup_{\|f\|_{HB(\alpha)}\leq1} \sup_{|z|\leq t}(1-|z|^2)^\alpha[|((I-L_{n})f)_{z}(z)|+|((I-L_{n})f)_{\bar{z}}(z)|]\\
&+\sup_{\|f\|_{HB(\alpha)}\leq1} \sup_{|z|>t}(1-|z|^2)^\alpha[|((I-L_{n})f)_{z}(z)|+|((I-L_{n})f)_{\bar{z}}(z)|]\\
&=I_{1}+I_{2}.
\end{align*}
In which
$$I_{1}=\sup_{\|f\|_{HB(\alpha)}\leq1} \sup_{|z|\leq t}(1-|z|^2)^\alpha[|((I-L_{n})f)_{z}(z)|+|((I-L_{n})f)_{\bar{z}}(z)|]<\varepsilon$$
and
$$I_{2}=\sup_{\|f\|_{HB(\alpha)}\leq1} \sup_{s_{l}<|z|<s_{l+1},k\neq l}(1-|z|^2)^\alpha[|((I-L_{n})f)_{z}(z)|+|((I-L_{n})f)_{\bar{z}}(z)|]+$$
$$\sup_{\|f\|_{HB(\alpha)}\leq1} \sup_{|z|>s_{k+1}}(1-|z|^2)^\alpha[|((I-L_{n})f)_{z}(z)|+|((I-L_{n})f)_{\bar{z}}(z)|].$$
For $k=l$, $s_{l}<|z|<s_{l+1}$ and $\|f\|\leq1$, we have
$$(1-|z|^2)^\alpha[|((I-k_{r_{l}})f)_{z}(z)|+|((I-k_{r_{l}})f)_{\bar{z}}(z)|]\leq\|I-k_{r_{l}}\|\leq\|I\|+\|k_{r_{l}}\|\leq2,$$
except possibly for $k\neq l$. However, in this case we get that
$$(1-|z|^2)^\alpha[|((I-k_{r_{l}})f)_{z}(z)|+|((I-k_{r_{l}})f)_{\bar{z}}(z)|]\leq1+\varepsilon$$
and so
\begin{align*}
&(1-|z|^2)^\alpha[|((I-L_{n})f)_{z}(z)|+|((I-L_{n})f)_{\bar{z}}(z)|]\\
&\leq\frac{1}{n}\sum_{k\neq l}(1-|z|^2)^\alpha[|((I-k_{r_{k}})f)_{z}(z)|+|((I-k_{r_{k}})f)_{\bar{z}}(z)|]\\
&+\frac{1}{n}(1-|z|^2)^\alpha[|((I-k_{r_{l}})f)_{z}(z)|+|((I-k_{r_{l}})f)_{\bar{z}}(z)|]\\
&<\frac{n-1}{n}(1+\varepsilon)+\frac{2}{n}\\
&<1+2\varepsilon.
\end{align*}
Therefore we get (c). The proof of the property (b) is exactly as in the proof of the Lemma 1 of \cite{mon}.
\end{proof}
Similar to the Lemma \ref{l2} for the case $\alpha=1$ we have the next Lemma.

\begin{lem}\label{l3} There is a sequence $\{r_{k}\}$, $0<r_{k}<1$, tending to 1, such that the compact operator $L_{n}=(\frac{1}{n})\sum_{k=1}^{n}K_{r_{k}}$ on $HB_{0}(\alpha)$ satisfies:

a) For any $t\in[0,1)$ we have
$$\lim_{n\rightarrow\infty} \sup_{\|f\|_{HB(\alpha)}\leq1} \sup_{|z|\leq t}[|((I-L_{n})f)_{z}(z)|+|((I-L_{n})f)_{\bar{z}}(z)|]=0.$$

b) $\lim_{n\rightarrow\infty} \sup_{\|f\|_{HB(\alpha)}\leq1} \sup_{|z|>s}|(I-L_{n})f(z)|(-\log(1-|z|^2))^{-1}\leq1$ for $s$ sufficiently close to 1
and
$$\lim_{n\rightarrow\infty} \sup_{\|f\|_{HB(\alpha)}\leq1} \sup_{|z|\leq s}|(I-L_{n})f(z)|=0.$$
c) $\limsup_{n\rightarrow\infty}\|I-L_{n}\|\leq1$.
\end{lem}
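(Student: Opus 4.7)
The plan is to mirror the proof of Lemma \ref{l2}, adapted to $\alpha=1$. The inductive construction of the sequences $\{r_k\}$ and $\{s_k\}$ used there relies only on the relative compactness of the unit ball of $HB_0(\alpha)$ in the topology of uniform convergence on compacta (which persists for all $\alpha>0$) together with the compactness of each dilation operator $K_r$. Running that construction with $\alpha=1$ yields inequalities of the form
\[
\sup_{\|f\|_{HB(1)}\leq 1}\sup_{|z|\leq s_k}(1-|z|^2)\bigl[|((I-K_{r_k})f)_z(z)|+|((I-K_{r_k})f)_{\bar z}(z)|\bigr]<\varepsilon
\]
together with the corresponding $1+\varepsilon$ bound on $\{|z|>s_{k+1}\}$. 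From these, parts (a) and (c) are obtained by exactly the same averaging and splitting arguments as in Lemma \ref{l2} (decomposing $\sup_{z\in D}$ into $\{|z|\leq t\}$, $\{s_l<|z|\leq s_{l+1}\}$ with $k\neq l$, and the single diagonal term $k=l$ whose $\|I-K_{r_l}\|\leq 2$ contribution is diluted by $1/n$), so they require no new work.

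The genuinely new part is (b). First I would note that each dilation $K_{r_k}$ preserves the value at the origin, so $L_nf(0)=f(0)$ and hence $(I-L_n)f(0)=0$. Combined with the harmonic Bloch growth estimate (which follows from the definition of $\beta_f$ as a Lipschitz constant with respect to the hyperbolic metric), this yields
\[
|(I-L_n)f(z)|\leq \beta_{(I-L_n)f}\,\varrho(z,0)\leq \|I-L_n\|\,\|f\|_{HB(1)}\cdot\tfrac{1}{2}\log\tfrac{1+|z|}{1-|z|}
\]
for every $z\in D$. Since $\tfrac{1}{2}\log\tfrac{1+|z|}{1-|z|}\sim -\tfrac{1}{2}\log(1-|z|^2)$ as $|z|\to 1$, one can pick $s$ close enough to $1$ so that the ratio $\bigl(\tfrac{1}{2}\log\tfrac{1+|z|}{1-|z|}\bigr)/(-\log(1-|z|^2))\leq 1$ on $\{|z|>s\}$; invoking $\limsup_n\|I-L_n\|\leq 1$ from (c) then delivers the first half of (b). The uniform vanishing on $\{|z|\leq s\}$ follows from (a) by integrating the derivative bound along any radial path from $0$ to $z$, using that $(I-L_n)f(0)=0$.

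The main obstacle is the bookkeeping required to pin down the sharp constant $1$ in (b): this uses both the exact asymptotic comparison between $\tfrac{1}{2}\log\tfrac{1+|z|}{1-|z|}$ and $-\log(1-|z|^2)$ at the boundary and the sharp form of (c), rather than any crude boundedness. Apart from this point, the argument is a faithful transcription of the case $0<\alpha<1$ already handled in Lemma \ref{l2}.
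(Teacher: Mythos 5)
Your proposal is correct, and for parts (a) and (c) it coincides with the paper's approach, which is simply to rerun the inductive construction of $\{r_k\}$, $\{s_k\}$ from Lemma \ref{l2} with $\alpha=1$ (the construction indeed only uses compactness of $K_r$ and relative compactness of the unit ball of $HB_0(\alpha)$ in the compact--open topology, so nothing changes). Where you genuinely diverge is part (b): the paper gives no argument at all for it and merely refers the reader to \cite{mz}, whereas you supply a self-contained proof. Your argument there is sound: since $K_{r_k}f(0)=f(0)$, the function $g=(I-L_n)f$ vanishes at the origin, and Colonna's identity $\beta_g=\sup_z(1-|z|^2)[|g_z(z)|+|g_{\bar z}(z)|]$ (quoted in the introduction) gives the hyperbolic Lipschitz bound $|g(z)|\leq \beta_g\,\varrho(z,0)=\beta_g\cdot\tfrac12\log\tfrac{1+|z|}{1-|z|}$; since the ratio of $\tfrac12\log\tfrac{1+|z|}{1-|z|}$ to $-\log(1-|z|^2)$ tends to $\tfrac12$ as $|z|\to1$, it is eventually below $1$, and combining with $\limsup_n\|I-L_n\|\leq1$ from (c) yields the first half of (b) (in fact with the better constant $\tfrac12$ in the limit); the second half follows from (a) by radial integration exactly as you say. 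This buys a proof that is adapted to the harmonic setting rather than imported from the analytic Bloch-space literature, at the cost of invoking Colonna's theorem; the only mild caveat is that your first display writes $\beta_{(I-L_n)f}\leq\|I-L_n\|\,\|f\|_{HB(1)}$, which is fine precisely because $(I-L_n)f$ vanishes at $0$ so its seminorm equals its norm --- worth stating explicitly.
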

\begin{proof} The proof of (a) and (b) is exactly the same as the proof of Lemma \ref{l2}. Also the proof of (b) is given in given in \cite{mz}.
\end{proof}
Also for the case $\alpha>1$, we have the following Lemma.
\begin{lem}\label{l4}
For $\alpha>1$, there is a sequence $\{r_{k}\}$ with $0<r_{k}<1$, tending to $1$. Also, the compact operator $L_{n}=(\frac{1}{n})\sum_{k=1}^{n}K_{r_{k}}$ on $HB_{0}(\alpha)$ has the followings properties:

a) For any $s\in[0,1)$, $$\lim_{n\rightarrow\infty} \sup_{\|f\|_{HB(\alpha)}\leq1} \sup_{|z|\leq s}[|((I-L_{n})f)_{z}(z)|+|((I-L_{n})f)_{\bar{z}}(z)|]=0.$$

b) For any $t\in[0,1)$, $$\lim_{n\rightarrow\infty} \sup_{\|f\|_{HB(\alpha)}\leq1} \sup_{|z|\leq t}|(I-L_{n})f(z)|=0.$$

c) $\limsup_{n\rightarrow\infty}\|I-L_{n}\|\leq1$.
\end{lem}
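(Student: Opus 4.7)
The plan is to mirror the construction and argument of Lemma \ref{l2}, adjusting only the bookkeeping to account for the faster boundary decay of the weight $(1-|z|^2)^\alpha$ when $\alpha>1$. As before, I would build sequences $0<s_k<1$ and $0<r_k<1$ tending to $1$ by induction, and then set $L_n=\tfrac1n\sum_{k=1}^n K_{r_k}$. Fix $\varepsilon>0$ and $s\in[0,1)$, pick $s_1>s$, and observe that the unit ball $S_{HB_0(\alpha)}$ is $\tau$-relatively compact, where $\tau$ is uniform convergence on compact subsets; this is a Montel-type fact that still holds for $\alpha>1$ because the Bloch seminorm, together with the lower bound $(1-|z|^2)^\alpha\ge(1-r^2)^\alpha$ on $|z|\le r<1$, controls both derivatives and values on every compact subset. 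Because $K_r f\to f$ in $\tau$ uniformly on this compact family, one can choose $r_1$ so that
\[
\sup_{\|f\|_{HB(\alpha)}\le1}\sup_{|z|\le s_1}(1-|z|^2)^{\alpha}\bigl[|((I-K_{r_1})f)_z(z)|+|((I-K_{r_1})f)_{\bar z}(z)|\bigr]<\varepsilon,
\]
then use compactness of $K_{r_1}$ to find $s_2>s_1$ beyond which the weighted derivatives of $K_{r_1}f$ are uniformly at most $\varepsilon$, and iterate.

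For property (a) the weight is bounded below by $(1-s^2)^\alpha>0$ on $|z|\le s$, so the bound on the weighted derivative sums transfers directly to the unweighted sums, and averaging over $k=1,\dots,n$ yields smallness of $\sup_{|z|\le s}[|((I-L_n)f)_z|+|((I-L_n)f)_{\bar z}|]$ uniformly in $f\in S_{HB_0(\alpha)}$. Property (b) is a Cesàro consequence of the $\tau$-convergence $K_{r_k}f\to f$ uniformly on $S_{HB_0(\alpha)}$ (which in turn follows from $\tau$-relative compactness), restricted to the fixed compact disk $|z|\le t$; this is weaker than the corresponding statement in Lemma \ref{l2} and is exactly what is available for $\alpha>1$, since functions in $HB(\alpha)$ need not be globally bounded.

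Property (c) is the crux. I would split
\[
\|(I-L_n)f\|_{HB(\alpha)}\le|((I-L_n)f)(0)|+\sup_{|z|\le s}(1-|z|^2)^\alpha[\cdots]+\sup_{|z|>s}(1-|z|^2)^\alpha[\cdots],
\]
where the first two terms are $o(1)$ and below $\varepsilon$ respectively by the construction. For the last term, partition $\{|z|>s\}$ into annuli $\{s_l<|z|\le s_{l+1}\}$ plus the outermost shell $\{|z|>s_{n+1}\}$. On each annulus indexed by $l$, the construction gives the bound $1+\varepsilon$ for every $(I-K_{r_k})$ with $k\ne l$, while for the single ``diagonal'' index $k=l$ one only has the trivial bound $\|I-K_{r_l}\|\le2$. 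Averaging, the annular contribution is at most $\tfrac{n-1}{n}(1+\varepsilon)+\tfrac{2}{n}$, which for $n>1/\varepsilon$ is at most $1+2\varepsilon$. Taking $\varepsilon\to0$ gives $\limsup_n\|I-L_n\|\le1$.

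The main obstacle I anticipate is not any single inequality but justifying the $\tau$-relative compactness of the unit ball of $HB_0(\alpha)$ for $\alpha>1$, since the global boundedness used implicitly in the $\alpha<1$ argument is no longer available. Once that is settled by the local Montel argument indicated above, the rest of the proof is essentially a repetition of the inductive construction used for Lemma \ref{l2}, with the only genuinely new feature being that part (b) must be stated and proved on compact subsets rather than globally.
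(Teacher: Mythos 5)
Your treatment of (a) and (c) coincides with the paper's: for these two parts the paper simply repeats the inductive construction of $s_k$ and $r_k$ from Lemma \ref{l2} and the averaging estimate $\frac{n-1}{n}(1+\varepsilon)+\frac{2}{n}$, exactly as you do, and your remark that the $\tau$-relative compactness of the unit ball still holds for $\alpha>1$ (via the local bound $|f_z|+|f_{\bar z}|\le (1-r^2)^{-\alpha}$ on $|z|\le r$ plus $|f(0)|\le 1$ and Arzel\`a--Ascoli) is the right way to settle the one point the paper leaves implicit. Where you genuinely diverge is part (b), which is the only part the paper proves in detail. The paper writes $f=h+\bar g$ and recovers $(I-L_n)f$ from its derivatives through an integral representation with kernel $\frac{(1-|w|^2)^{1+\alpha}}{|w||1-\bar z w|^{2+\alpha}}$, then splits the integral over $D_s$ and $D\setminus D_s$: the outer piece is controlled by the tail estimate $\frac{1-s}{s(1-t)^{2+\alpha}}<\varepsilon$, and the inner piece by invoking part (a). Your route is more elementary: on the fixed compact disk $|z|\le t$ the weight is bounded below, so radial integration of the derivative gives $|f(z)-f(rz)|\le (1-r)t\,(1-t^2)^{-\alpha}$ uniformly over the unit ball, and the Ces\`aro mean of $(1-r_k)$ tends to $0$; this avoids the kernel computation entirely, does not even use the inductive choice of the $r_k$ (any sequence tending to $1$ works), and makes transparent why (b) can only be asserted on compact subsets when $\alpha>1$. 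Both arguments are sound; yours is shorter and cleaner, the paper's has the (minor) advantage of reducing (b) formally to (a). To make your version airtight you should record the one-line chain-rule estimate $\bigl|\frac{d}{ds}f(\gamma(s))\bigr|\le\bigl[|f_w(\gamma(s))|+|f_{\bar w}(\gamma(s))|\bigr]\,|\gamma'(s)|$ along the radial segment $\gamma(s)=rz+s(1-r)z$, since $f$ is harmonic rather than holomorphic.
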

\begin{proof}
The proof of (a) and (c) is the same as the proof of the Lemma \ref{l2}. Hence we prove (b). For harmonic function $f=h+\bar{g}$ we have
\begin{align*}
|(I-L_{n})f(z)|&\leq|(I-L_{n})h(z)|+|(I-L_{n})g(z)|\\
&\leq\int_{D}\frac{(1-|w|^2)^{1+\alpha}}{|w||1-\bar{z}w|^{2+\alpha}}[|((I-L_{n})h)^{'}(z)|+|((I-L_{n})g)^{'}(z)|]dA(w)\\
&=\int_{D}\frac{(1-|w|^2)^{1+\alpha}}{|w||1-\bar{z}w|^{2+\alpha}}[|((I-L_{n})f)_{w}(z)|+|((I-L_{n})f)_{w}(z)|]dA(w).
\end{align*}
For each $t\in[0,1)$ and any $\varepsilon>0$ we can find $s\in[0,1)$ sufficiently close to $1$ such that
$$\frac{1-s}{s(1-t)^{2+\alpha}}<\varepsilon.$$
Hence we get that
\begin{align*}
I_{1}&=\sup_{\|f\|_{HB(\alpha)}\leq1} \sup_{|z|\leq t}\int_{D\setminus D_{s}}\frac{(1-|w|^2)^{1+\alpha}}{|w||1-\bar{z}w|^{2+\alpha}}[|((I-L_{n})f)_{w}(z)|+|((I-L_{n})f)_{w}(z)|]dA(w)\\
&\leq\sup_{\|f\|_{HB(\alpha)}\leq1}\|(I-L_{n})f\|_{HB(\alpha)}\sup_{|z|\leq t}|\int_{D\setminus D_{s}}\frac{(1-|w|^2)^{1+\alpha}}{|w||1-\bar{z}w|^{2+\alpha}}dA(w)\\
&\leq\frac{1}{s(1-t)^{2+\alpha}}(1-s)<\varepsilon,
\end{align*}
 when $s$ is sufficiently close to $1$. By (a) we can find $N>0$ such that for all $n>N$,
$$\sup_{\|f\|_{HB(\alpha)}\leq1} \sup_{|z|\leq s}[|((I-L_{n})f)_{z}(z)|+|((I-L_{n})f)_{\bar{z}}(z)|]<\varepsilon,$$
and so we conclude that
\begin{align*}
I_{2}&=\sup_{\|f\|_{HB(\alpha)}\leq1} \sup_{|z|\leq t}\int_{D_{s}}\frac{(1-|w|^2)^{1+\alpha}}{|w||1-\bar{z}w|^{2+\alpha}}[|((I-L_{n})f)_{w}(z)|+|((I-L_{n})f)_{w}(z)|]dA(w)\\
&\leq\frac{\varepsilon}{(1-t)^{2+\alpha}}\sup_{|z|\leq t}\int_{D_{s}}\frac{(1-|w|^2)^{1+\alpha}}{|w|}dA(w)<C\varepsilon,
\end{align*}
for some $C>0$. Therefore by combining the above inequalities, we get that for $n>N$,
\begin{align*}
&\sup_{\|f\|_{HB(\alpha)}\leq1} \sup_{|z|\leq t}|(I-L_{n})f(z)|\\
&\leq\sup_{\|f\|_{HB(\alpha)}\leq1} \sup_{|z|\leq t}\int_{D}\frac{(1-|w|^2)^{1+\alpha}}{|w||1-\bar{z}w|^{2+\alpha}}[|((I-L_{n})f)_{w}(z)|+|((I-L_{n})f)_{w}(z)|]dA(w)\\
&\leq I_{1}+I_{2}<(1+C)\varepsilon.
\end{align*}
This implies that
$$\lim_{n\rightarrow\infty} \sup_{\|f\|_{HB(\alpha)}\leq1} \sup_{|z|\leq t}|(I-L_{n})f(z)|=0.$$
\end{proof}
Now we can find an upper bound for the essential norm of $C_{\varphi}:HB(\alpha)\rightarrow HB(\alpha)$.
\begin{thm}\label{t14}
Let $\alpha>0$ and $C_{\varphi}:HB(\alpha)\rightarrow HB(\alpha)$ be  bounded. Then we have

$$\|C_{\varphi}\|_{e}\leq \lim_{s\rightarrow 1}\sup _{|\varphi(z)|> s}\frac{(1-|z|^{2})^\alpha}{(1-|\varphi(z)|^{2})^\alpha} |\varphi^{'}(z)|.$$
\end{thm}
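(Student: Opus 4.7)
The plan is to exploit the compact operators $L_n = \frac{1}{n}\sum_{k=1}^{n} K_{r_k}$ furnished by Lemmas \ref{l2}, \ref{l3}, \ref{l4} (depending on whether $\alpha<1$, $\alpha=1$, or $\alpha>1$) and to bound
$$\|C_\varphi\|_{e}\le \|C_\varphi - C_\varphi L_n\| = \|C_\varphi(I-L_n)\|,$$
using that $C_\varphi L_n$ is compact on $HB(\alpha)$. The goal is to drive the right-hand side toward the asserted limit by first letting $n\to\infty$ and then $s\to 1$.

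Since $\varphi$ is analytic, a direct computation for a harmonic $F=h+\bar g$ gives
$$|(C_\varphi F)_z(z)| + |(C_\varphi F)_{\bar z}(z)| = |\varphi'(z)|\bigl(|F_z(\varphi(z))| + |F_{\bar z}(\varphi(z))|\bigr).$$
Applying this to $F=(I-L_n)f$ for $f$ in the unit ball of $HB(\alpha)$, the norm $\|C_\varphi(I-L_n)f\|_{HB(\alpha)}$ splits as $|F(\varphi(0))|$ plus a seminorm; the point-evaluation piece vanishes uniformly in $f$ as $n\to\infty$ by property (b) of the relevant lemma. I would then apply the boundedness of $C_\varphi$ to the test function $f(z)=z$ to record that $(1-|z|^2)^\alpha |\varphi'(z)|$ is bounded on $D$, and split the seminorm over the two regions $\{|\varphi(z)|\le s\}$ and $\{|\varphi(z)|> s\}$ for a fixed $s\in(0,1)$.

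On the first region, property (a) of the relevant lemma makes $\sup_{|w|\le s}\bigl(|F_z(w)| + |F_{\bar z}(w)|\bigr)$ arbitrarily small for large $n$, so this piece tends to $0$. On the second, I would factor
$$(1-|z|^2)^\alpha |\varphi'(z)| \bigl(|F_z(\varphi(z))|+|F_{\bar z}(\varphi(z))|\bigr) = \frac{(1-|z|^2)^\alpha |\varphi'(z)|}{(1-|\varphi(z)|^2)^\alpha}\cdot (1-|\varphi(z)|^2)^\alpha\bigl(|F_z|+|F_{\bar z}|\bigr),$$
and bound the second factor by $\|(I-L_n)f\|_{HB(\alpha)} \le \|I-L_n\|$. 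Taking $\limsup_n$ and invoking property (c) yields
$$\|C_\varphi\|_{e}\le \sup_{|\varphi(z)|>s}\frac{(1-|z|^2)^\alpha |\varphi'(z)|}{(1-|\varphi(z)|^2)^\alpha},$$
after which letting $s\to 1$ gives the theorem.

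The main obstacle is technical rather than conceptual. First, Lemmas \ref{l2}--\ref{l4} phrase $L_n$ as compact on $HB_0(\alpha)$, whereas we need $C_\varphi L_n$ compact on $HB(\alpha)$; this is resolved by observing that each $K_r$ sends $HB(\alpha)$ into $HB_0(\alpha)$ (since $(1-|z|^2)^\alpha |f_z(rz)|\to 0$ as $|z|\to 1$ for fixed $r<1$), so $L_n:HB(\alpha)\to HB_0(\alpha)\hookrightarrow HB(\alpha)$ is compact and the supremum bound $\|I-L_n\|\le 1+o(1)$ persists on the larger space by the same argument as in the lemma. Second, the three ranges of $\alpha$ must be handled uniformly; fortunately each lemma delivers the same three properties (a)--(c), so only the bookkeeping changes with $\alpha$.
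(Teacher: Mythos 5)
Your proposal is correct and follows essentially the same route as the paper's proof: approximate $C_\varphi$ by the compact operators $C_\varphi L_n$, split off the point-evaluation term via property (b), decompose the seminorm over $\{|\varphi(z)|\le s\}$ and $\{|\varphi(z)|>s\}$, kill the first piece with property (a) and boundedness of $(1-|z|^2)^\alpha|\varphi'(z)|$, and bound the second by $\|I-L_n\|\cdot\sup_{|\varphi(z)|>s}\frac{(1-|z|^2)^\alpha}{(1-|\varphi(z)|^2)^\alpha}|\varphi'(z)|$ before letting $n\to\infty$ and $s\to1$. Your added remarks on passing from $HB_0(\alpha)$ to $HB(\alpha)$ and on treating the three ranges of $\alpha$ are in fact slightly more careful than the paper, which simply invokes Lemma \ref{l4}.
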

\begin{proof} Let $L_{n}$ be a sequence of operators given in Lemma \ref{l4}. Since each $L_{n}$ is a compact operator from $HB(\alpha)$ to $HB(\alpha)$, then $C_{\varphi}L_{n}$ is compact and we have
\begin{align*}
\|C_{\varphi}\|_{e}&\leq\|C_{\varphi}-C_{\varphi}L_{n}\|\\
&=\|C_{\varphi}(I-L_{n})\|\\
&=\sup_{\|f\|_{HB(\alpha)}\leq1}\|C_{\varphi}(I-L_{n})f\|_{HB(\alpha)}\\
&\leq\sup_{\|f\|_{HB(\alpha)}\leq1}|(I-L_{n})f(\varphi(0))|\\
&+\sup_{\|f\|_{HB(\alpha)}\leq1}\sup_{z\in D}(1-|z|^2)^\alpha |\varphi^{'}(z)|[|((I-L_{n})f)_{z}(\varphi(z))|+[|((I-L_{n})f)_{\bar{z}}(\varphi(z))|].
\end{align*}
By the Lemma \ref{l4} we can suppose the term $\sup_{\|f\|_{HB(\alpha)}\leq1}|(I-L_{n})f(\varphi(0))|$ sufficiently small for some large enough $n$.
Now we need only consider the term
$$\sup_{\|f\|_{HB(\alpha)}\leq1}\sup_{z\in D}(1-|z|^2)^\alpha |\varphi^{'}(z)|[|((I-L_{n})f)_{z}(\varphi(z))|+[|((I-L_{n})f)_{\bar{z}}(\varphi(z))|].$$
For every $0<s<1$, we bound this expression from above by
\begin{align*}
&\sup_{\|f\|_{HB(\alpha)}\leq1}\sup_{|\varphi(z)|\leq s}(1-|z|^2)^\alpha |\varphi^{'}(z)|[|((I-L_{n})f)_{z}(\varphi(z))|+[|((I-L_{n})f)_{\bar{z}}(\varphi(z))|]\\
&+\sup_{\|f\|_{HB(\alpha)}\leq1}\sup_{|\varphi(z)|> s}(1-|z|^2)^\alpha |\varphi^{'}(z)|[|((I-L_{n})f)_{z}(\varphi(z))|+[|((I-L_{n})f)_{\bar{z}}(\varphi(z))|].
\end{align*}
Since $C_{\varphi}$ is bounded from $HB(\alpha)$ into $HB(\alpha)$, then we conclude that
$$(1-|z|^2)^\alpha |\varphi^{'}(z)|\leq (1-|\varphi(z)|^{2})^{\alpha}\sup_{z\in D}(1-|z|^2)^\alpha |\varphi^{'}(z)|<\infty,$$
and so
$$\sup_{|\varphi(z)|\leq s}(1-|z|^2)^\alpha |\varphi^{'}(z)|<\infty.$$
Hence by the Lemma \ref{l4} we see that
$$\lim_{n\rightarrow\infty}\sup_{\|f\|_{HB(\alpha)}\leq1}\sup_{|\varphi(z)|\leq s}(1-|z|^2)^\alpha |\varphi^{'}(z)|[|((I-L_{n})f)_{z}(\varphi(z))|+[|((I-L_{n})f)_{\bar{z}}(\varphi(z))|]=0.$$
Easily we find that the term
$$\sup_{\|f\|_{HB(\alpha)}\leq1}\sup_{|\varphi(z)|> s}\frac{(1-|z|^2)^\alpha |\varphi^{'}(z)|}{(1-|\varphi(z)|^{2})^{\alpha}}(1-|\varphi(z)|^{2})^{\alpha}[|((I-L_{n})f)_{z}(\varphi(z))|+[|((I-L_{n})f)_{\bar{z}}(\varphi(z))|]$$
is bounded above by
\begin{align*}
&\sup_{\|f\|_{HB(\alpha)}\leq1}\|(I-L_{n})f\|_{HB(\alpha)}\sup_{|\varphi(z)|> s}\frac{(1-|z|^2)^\alpha |\varphi^{'}(z)|}{(1-|\varphi(z)|^{2})^{\alpha}}\\
&=\|I-L_{n}\|\sup_{|\varphi(z)|> s}\frac{(1-|z|^2)^\alpha }{(1-|\varphi(z)|^{2})^{\alpha}}|\varphi^{'}(z)|.
\end{align*}
Therefore we have
\begin{align*}
&\limsup_{n\rightarrow\infty}\sup_{\|f\|_{HB(\alpha)}\leq1}\sup_{|\varphi(z)|> s}(1-|z|^2)^\alpha |\varphi^{'}(z)|[|((I-L_{n})f)_{z}(\varphi(z))|+[|((I-L_{n})f)_{\bar{z}}(\varphi(z))|]\\
&\leq\sup_{|\varphi(z)|> s}\frac{(1-|z|^2)^\alpha }{(1-|\varphi(z)|^{2})^{\alpha}}|\varphi^{'}(z)|,
\end{align*}
and so we get that
$$\|C_{\varphi}\|_{e}\leq\sup_{|\varphi(z)|> s}\frac{(1-|z|^2)^\alpha }{(1-|\varphi(z)|^{2})^{\alpha}}|\varphi^{'}(z)|.$$
Consequently by taking the limit as $s\rightarrow1$, we get the result.
\end{proof}
We remark that a similar result can be obtained for the essential norm of $C_{\varphi}$ acting boundedly on $HB_{0}(\alpha)$, for $\alpha>0$, where in the essential norm formula in the Theorem \ref{t14} we may replace $\lim_{s\rightarrow 1}\sup _{|\varphi(z)|> s}$ by $\limsup_{|z|\rightarrow1}$. This equivalence will be shown in the following Theorem:
\begin{thm}\label{t17}
If $\alpha>0$ and $C_{\varphi}:HB_{0}(\alpha)\rightarrow HB_{0}(\alpha)$ is  bounded. Then
$$\|C_{\varphi}\|_{e}=\limsup_{|z|\rightarrow1}\frac{(1-|z|^{2})^\alpha}{(1-|\varphi(z)|^{2})^\alpha} |\varphi^{'}(z)|.$$
\end{thm}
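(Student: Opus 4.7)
The plan is to deduce Theorem~\ref{t17} by assembling two pieces. First, the bounds of Theorems~\ref{t12} and~\ref{t14} transfer verbatim to the little Bloch space $HB_0(\alpha)$, yielding
$$\|C_\varphi\|_e \;=\; \lim_{s\to 1}\sup_{|\varphi(z)|>s} Q(z),\qquad Q(z):=\frac{(1-|z|^2)^\alpha}{(1-|\varphi(z)|^2)^\alpha}|\varphi'(z)|.$$
Second, the supremum form on the right coincides with $\limsup_{|z|\to 1} Q(z)$ whenever $C_\varphi$ acts boundedly on $HB_0(\alpha)$. Combining the two pieces proves the theorem.

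To check the first piece I would observe: in Theorem~\ref{t12} the test functions $f_n=(z^n+\bar z^n)/\|z^n+\bar z^n\|_{HB(\alpha)}$ are polynomials in $z,\bar z$ and hence already lie in $HB_0(\alpha)$; their weak convergence to $0$ and the growth estimates carry over unchanged, giving $\|C_\varphi\|_e\geq\lim_{s\to 1}\sup_{|\varphi(z)|>s}Q(z)$. For the upper bound, the compact operators $L_n$ produced in Lemmas~\ref{l2}--\ref{l4} are already defined on $HB_0(\alpha)$, so $C_\varphi L_n$ is compact there and the proof of Theorem~\ref{t14} applies with no modification.

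The substantive new step is the equivalence
$$A:=\limsup_{|z|\to 1} Q(z)\;=\;B:=\lim_{s\to 1}\sup_{|\varphi(z)|>s} Q(z).$$
The preliminary fact that drives it is that the coordinate function $z\mapsto z$ lies in $HB_0(\alpha)$ for every $\alpha>0$, so $\varphi=C_\varphi(\mathrm{id})\in HB_0(\alpha)$, which gives
$$(1-|z|^2)^\alpha|\varphi'(z)|\longrightarrow 0\quad\text{as }|z|\to 1.$$
For $A\leq B$, fix $s\in(0,1)$ and partition $\{|z|>r\}$ into $\{|\varphi(z)|\leq s\}$ and $\{|\varphi(z)|>s\}$. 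On the first piece $Q(z)\leq(1-s^2)^{-\alpha}(1-|z|^2)^\alpha|\varphi'(z)|$, which tends uniformly to $0$ as $r\to 1$ by the preliminary fact; on the second piece $Q(z)\leq\sup_{|\varphi(z)|>s}Q(z)$ by definition. Sending $r\to 1$ gives $A\leq\sup_{|\varphi(z)|>s}Q(z)$ for every $s$, and then $s\to 1$ gives $A\leq B$. For $B\leq A$, I would use compactness: for any fixed $r\in(0,1)$ the continuous function $|\varphi|$ attains a maximum $M(r)<1$ on $\{|z|\leq r\}$, so whenever $s>M(r)$ the condition $|\varphi(z)|>s$ forces $|z|>r$, whence $\sup_{|\varphi(z)|>s}Q(z)\leq\sup_{|z|>r}Q(z)$; letting $s\to 1$ then $r\to 1$ yields $B\leq A$.

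The main obstacle is the inequality $A\leq B$, which genuinely needs the little-Bloch hypothesis through $\varphi\in HB_0(\alpha)$. On the full space $HB(\alpha)$ the quantities $A$ and $B$ can differ, and it is precisely the decay $(1-|z|^2)^\alpha|\varphi'(z)|\to 0$ that rules out the potentially bad region in which $|z|\to 1$ while $|\varphi(z)|$ remains bounded away from $1$.
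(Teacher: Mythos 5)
Your proposal is correct and takes essentially the same route as the paper: both reduce the theorem to the identity $\lim_{s\to1}\sup_{|\varphi(z)|>s}\frac{(1-|z|^{2})^{\alpha}}{(1-|\varphi(z)|^{2})^{\alpha}}|\varphi'(z)|=\limsup_{|z|\to1}\frac{(1-|z|^{2})^{\alpha}}{(1-|\varphi(z)|^{2})^{\alpha}}|\varphi'(z)|$, which rests on the observation that $\varphi=C_{\varphi}(\mathrm{id})\in HB_{0}(\alpha)$ forces $(1-|z|^{2})^{\alpha}|\varphi'(z)|\to0$ as $|z|\to1$. The only difference is one of completeness: the paper delegates this equivalence to Theorem 2.2 of \cite{mor}, whereas you carry out both inequalities explicitly (including the compactness argument showing $|\varphi(z)|>s$ forces $|z|>r$), and your details are sound.
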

\begin{proof}
By Theorems \ref{t14}, it would be sufficient to prove the equality
$$\limsup_{|z|\rightarrow1}\frac{(1-|z|^{2})^\alpha}{(1-|\varphi(z)|^{2})^\alpha} |\varphi^{'}(z)|=\lim_{s\rightarrow 1}\sup _{|\varphi(z)|> s}\frac{(1-|z|^{2})^\alpha}{(1-|\varphi(z)|^{2})^\alpha} |\varphi^{'}(z)|.$$
This equivalence can be shown by a similar argument of Theorem 2.2 in \cite{mor} and using the characterization of boundedness of $C_{\varphi}:HB_{0}(\alpha)\rightarrow HB_{0}(\alpha)$ for $\alpha>0$.
\end{proof}
Recall that the composition operator $C_{\varphi}:HB(\alpha)\rightarrow HB(\alpha)$ is said to be bounded below, if there exists $\varepsilon>0$ such that
$$|||C_{\varphi}f|||_{HB(\alpha)}\geq\varepsilon|||f|||_{HB(\alpha)},$$
for any $f\in HB(\alpha)$. In the next theorem we characterize boundedness of below of $C_{\varphi}$ on $HB(\alpha)$.
\begin{thm}\label{lem11}
The composition operator $C_{\varphi}$ is bounded below on $HB(\alpha)$ if and only if there exists $\delta>0$ such that $\|C_{\varphi}(f)\|_{HB(\alpha)}\geq \delta \|f\|_{HB(\alpha)}$ for $f\in HB(\alpha)$.
\end{thm}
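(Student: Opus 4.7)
The theorem asserts the equivalence of two natural lower-bound conditions on $C_\varphi$: the seminorm inequality $|||C_\varphi f|||_{HB(\alpha)} \geq \varepsilon\, |||f|||_{HB(\alpha)}$ (the stated definition of ``bounded below'') and the full norm inequality $\|C_\varphi f\|_{HB(\alpha)} \geq \delta \|f\|_{HB(\alpha)}$. Since $\|f\| = |f(0)| + |||f|||$, the content lies in showing the constant-term $|f(0)|$ can be absorbed into each side without destroying either inequality. I would treat the two implications separately.

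For the easy direction, assuming the norm bound and deducing the seminorm bound, I would use the normalization trick: given $f \in HB(\alpha)$, set $g := f - f(\varphi(0))$. Subtracting a constant does not change the seminorm, so $|||g||| = |||f|||$; moreover $(C_\varphi g)(0) = g(\varphi(0)) = 0$ and $|||C_\varphi g||| = |||C_\varphi f|||$, hence
$\|C_\varphi g\|_{HB(\alpha)} = |||C_\varphi f|||$ while $\|g\|_{HB(\alpha)} = |f(0) - f(\varphi(0))| + |||f||| \geq |||f|||$. The assumed norm bound applied to $g$ gives $|||C_\varphi f||| = \|C_\varphi g\| \geq \delta\|g\| \geq \delta\, |||f|||$, so $C_\varphi$ is bounded below with $\varepsilon = \delta$.

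For the converse, the key tool is the standard growth estimate for harmonic $\alpha$-Bloch functions. Writing $f = h + \bar g$ and integrating $|f_z| + |f_{\bar z}| \leq |||f|||/(1-|z|^2)^\alpha$ along the segment from $0$ to $\varphi(0)$ yields
$|f(\varphi(0)) - f(0)| \leq M\, |||f|||$
with $M = M(|\varphi(0)|,\alpha) < \infty$ (since $|\varphi(0)| < 1$). I would then split on the size of $|f(0)|$ relative to $|||f|||$. If $|f(0)| \leq 2M|||f|||$, then $\|f\| \leq (2M+1)|||f|||$, so $\|C_\varphi f\| \geq |||C_\varphi f||| \geq \varepsilon\,|||f||| \geq \tfrac{\varepsilon}{2M+1}\|f\|$. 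In the opposite case $|f(0)| > 2M|||f|||$, the growth estimate forces $|f(\varphi(0))| \geq |f(0)|/2$, hence $\|C_\varphi f\| \geq |f(\varphi(0))| \geq |f(0)|/2 \geq \tfrac{M}{2M+1}\|f\|$. Taking $\delta := \min(\varepsilon, M)/(2M+1)$ works (with the obvious simplification $\delta = \min(1,\varepsilon)$ when $\varphi(0) = 0$, where $M$ vanishes and $|f(\varphi(0))| = |f(0)|$).

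The only nonroutine ingredient is the harmonic Bloch growth estimate, which is standard but must be adapted from the analytic case via the canonical decomposition $f = h + \bar g$; the rest of the argument is short algebra exploiting that a constant shift preserves the seminorm and that $(C_\varphi g)(0) = g(\varphi(0))$. I expect no genuine obstacle beyond keeping track of the two cases in the converse direction.
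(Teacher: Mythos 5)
Your proof is correct, and it draws on the same two ingredients as the paper --- the normalization $g=f-f(\varphi(0))$ and the growth estimate obtained by integrating $|f_{z}|+|f_{\bar z}|\le |||f|||_{HB(\alpha)}(1-|z|^{2})^{-\alpha}$ along the segment from $0$ to $\varphi(0)$ --- but you attach them to the opposite implications, and your assignment is the one that actually closes the argument. The paper uses the normalization for the direction ``seminorm bound $\Rightarrow$ norm bound'' and asserts $\|g\|_{HB(\alpha)}=\|f\|_{HB(\alpha)}$ and $\|g\circ\varphi\|_{HB(\alpha)}=\|f\circ\varphi\|_{HB(\alpha)}$; both identities fail in general (indeed $\|g\|_{HB(\alpha)}=|f(0)-f(\varphi(0))|+|||f|||_{HB(\alpha)}$), and the step $|||g\circ\varphi|||_{HB(\alpha)}\ge\delta\|g\|_{HB(\alpha)}$ does not follow from the seminorm hypothesis, so the constant term $|f(0)|$ is never actually controlled there. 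You instead prove that implication with the growth estimate together with the two-case split comparing $|f(0)|$ to $2M|||f|||_{HB(\alpha)}$, which is exactly what is needed to absorb the constant term, and you reserve the normalization trick for ``norm bound $\Rightarrow$ seminorm bound,'' where it works cleanly because $(C_{\varphi}g)(0)=g(\varphi(0))=0$ kills the offending term while $\|g\|_{HB(\alpha)}\ge|||f|||_{HB(\alpha)}$. Your treatment of the degenerate case $\varphi(0)=0$ (where $M=0$) is a detail the paper omits entirely. In short: same toolbox, but your routing of the tools is the logically sound one, and your write-up repairs the broken identities and the garbled norm/seminorm bookkeeping in the paper's version.
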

\begin{proof}
First we assume that $C_{\varphi}$ is bounded below on $HB(\alpha)$, so there exists a $\delta>0$ such that $|||go\varphi|||_{HB(\alpha)}\geq \delta |||g|||_{HB(\alpha)}$ for $g\in HB(\alpha)$. Let $g(z)=f(z)-f(\varphi(0))$ for $f\in HB(\alpha)$. Hence $g\in HB(\alpha)$, $g(\varphi(0))=0$, $\|g\|_{HB(\alpha)}=\|f\|_{HB(\alpha)}$ and $\|go\varphi\|_{HB(\alpha)}=\|fo\varphi\|_{HB(\alpha)}$. Therefore
\begin{align*}
\|C_{\varphi}(f)\|_{HB(\alpha)}&=\|fo\varphi\|_{HB(\alpha)}\\
&=|||go\varphi|||_{HB(\alpha)}\\
&\geq \delta \|g\|_{HB(\alpha)}=\delta \|f\|_{HB(\alpha)}.
\end{align*}
Conversely, suppose that there exists $\delta>0$ such that $\|C_{\varphi}(f)\|_{HB(\alpha)}\geq \delta \|f\|_{HB(\alpha}$ for all $f\in HB(\alpha)$. Then for every $f\in HB(\alpha)$, we have
\begin{align*}
|f(0)|&\leq |f(\varphi(0))|+\int_{o}^{\varphi(0)}[|(f)_{z}(z)|+|(f)_{\bar{z}}(z)|]|dz|\\
&=|f(\varphi(0))|+\int_{o}^{\varphi(0)}(1-|z|^2)^{\alpha}[|(f)_{z}(z)|+|(f)_{\bar{z}}(z)|]\frac{|dz|}{(1-|z|^2)^{\alpha}}\\
&\leq |f(\varphi(0))|+\|f\|_{HB(\alpha)}\int_{o}^{\varphi(0)}\frac{|dz|}{(1-|z|^2)^{\alpha}}.
\end{align*}
If $\alpha=1$, then
$$|f(0)|\leq |f(\varphi(0))|+\frac{\|f\|_{HB(\alpha)}}{2}\ln \frac{1+|\varphi(0)|}{1-|\varphi(0)|}$$
and so
\begin{align*}
|||f|||_{HB(\alpha)}&=|f(0)|+\|f\|_{HB(\alpha)}\\
&\leq\frac{|||fo\varphi|||_{HB(\alpha)}}{\delta}(1+\frac{1}{2}ln \frac{1+|\varphi(0)|}{1-|\varphi(0)|}).
\end{align*}
Since, $\|fo\varphi\|_{HB(\alpha)}\geq \delta \|f\|_{HB(\alpha)}$.
As the same way if $\alpha\neq1$,
\begin{align*}
|f(0)|&\leq |f(\varphi(0))|+\|f\|_{HB(\alpha)}\int_{o}^{\varphi(0)}\frac{|dz|}{(1-|z|^2)^{\alpha}}\\
&\leq|f(\varphi(0))|+\|f\|_{HB(\alpha)}\int_{o}^{\varphi(0)}\frac{|dz|}{(1-|z|)^{\alpha}},
\end{align*}
so
\begin{align*}
|f(0)|&\leq |f(\varphi(0))|+\frac{\|f\|_{HB(\alpha)}}{-\alpha+1}[(1-|\varphi(0)|)^{-\alpha+1}-1]|||f|||_{HB(\alpha)}\\
&\leq \frac{|||fo\varphi|||_{HB(\alpha)}}{\delta}(1+\frac{1}{-\alpha+1}[(1-|\varphi(0)|)^{-\alpha+1}-1]).
\end{align*}
By these observations we get that $C_{\varphi}$ is bounded below. \cite{ch}
\end{proof}
Here we give an equivalent condition for boundedness of $C_{\varphi}:HB(\alpha)\rightarrow HB(\alpha)$.
\begin{thm}\label{t15}
Let $0<\alpha<\infty$ and $\varphi$ be an analytic self-map of the unit disk $D$. Then $C_{\varphi}:HB(\alpha)\rightarrow HB(\alpha)$ is bounded if and only if
$$\sup_{n\in N}n^{\alpha-1}\|\varphi^{n}+\bar{\varphi}^{n}\|_{HB(\alpha)}<\infty.$$
\end{thm}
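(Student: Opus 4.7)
The plan is to exploit the explicit formula
$$\|\varphi^n+\bar\varphi^n\|_{HB(\alpha)}=|\varphi(0)^n+\overline{\varphi(0)^n}|+\sup_{z\in D}2n(1-|z|^2)^\alpha|\varphi(z)|^{n-1}|\varphi'(z)|,$$
which follows from the analyticity of $\varphi$, since $(\varphi^n+\bar\varphi^n)_z=n\varphi^{n-1}\varphi'$ and $(\varphi^n+\bar\varphi^n)_{\bar z}=\overline{n\varphi^{n-1}\varphi'}$. Specializing to $\varphi(z)=z$, Lemma \ref{l1} together with the elementary asymptotic $\bigl(\frac{n-1}{n-1+2\alpha}\bigr)^{(n-1)/2}\to e^{-\alpha}$ gives $\|z^n+\bar z^n\|_{HB(\alpha)}\asymp n^{1-\alpha}$.

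For the forward direction, boundedness of $C_\varphi$ applied to $z^n+\bar z^n$ immediately yields
$$\|\varphi^n+\bar\varphi^n\|_{HB(\alpha)}=\|C_\varphi(z^n+\bar z^n)\|_{HB(\alpha)}\le \|C_\varphi\|\,\|z^n+\bar z^n\|_{HB(\alpha)}\le C\|C_\varphi\|n^{1-\alpha},$$
so $\sup_n n^{\alpha-1}\|\varphi^n+\bar\varphi^n\|_{HB(\alpha)}<\infty$.

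For the converse, set $M:=\sup_n n^{\alpha-1}\|\varphi^n+\bar\varphi^n\|_{HB(\alpha)}$. By the standard characterization already used in the proof of Theorem \ref{t14}, it suffices to show
$$\sup_{z\in D}\frac{(1-|z|^2)^\alpha}{(1-|\varphi(z)|^2)^\alpha}|\varphi'(z)|<\infty.$$
Fix $z\in D$ and let $u=|\varphi(z)|$. Since Lemma \ref{l1}(c) asserts $n^\alpha H_{n,\alpha}(r_{n+1})\to(2\alpha/e)^\alpha$, there is $N_0$ such that $n^{-\alpha}/H_{n,\alpha}(r_{n+1})\le K$ for every $n\ge N_0$. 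If $u\ge r_{N_0}$, pick the unique $n\ge N_0$ with $u\in[r_n,r_{n+1}]$; the explicit formula and the hypothesis give
$$(1-|z|^2)^\alpha u^{n-1}|\varphi'(z)|\le\frac{1}{2n}\|\varphi^n+\bar\varphi^n\|_{HB(\alpha)}\le\frac{M}{2}n^{-\alpha},$$
whereas Lemma \ref{l1}(c) guarantees $u^{n-1}(1-u^2)^\alpha\ge H_{n,\alpha}(r_{n+1})$. Dividing one inequality by the other,
$$\frac{(1-|z|^2)^\alpha|\varphi'(z)|}{(1-u^2)^\alpha}\le\frac{(M/2)n^{-\alpha}}{H_{n,\alpha}(r_{n+1})}\le\frac{MK}{2}.$$

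For the remaining range $u\le r_{N_0}$ I apply the hypothesis with $n=1$: since $(\varphi+\bar\varphi)_z=\varphi'$ and $(\varphi+\bar\varphi)_{\bar z}=\overline{\varphi'}$, this yields $\sup_z(1-|z|^2)^\alpha|\varphi'(z)|\le M/2$, and hence
$$\frac{(1-|z|^2)^\alpha|\varphi'(z)|}{(1-u^2)^\alpha}\le\frac{M/2}{(1-r_{N_0}^2)^\alpha}.$$
Patching the two regimes delivers the required supremum bound. The main obstacle is the uniform control in the boundary regime $u\to 1$; this is where the sharp asymptotic in Lemma \ref{l1}(c) is essential, since only $n^{-\alpha}\lesssim H_{n,\alpha}(r_{n+1})$ is strong enough to compensate for the $n^{-1}$ loss coming from the factor $2n$ in the derivative of $\varphi^n+\bar\varphi^n$. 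The complementary range, where $|\varphi(z)|$ stays away from $1$, is elementary and is handled by the single test function $\varphi+\bar\varphi$.
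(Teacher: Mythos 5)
Your proof is correct and follows essentially the same route as the paper: the necessity uses the test functions $z^{n}+\bar{z}^{n}$ with the norm asymptotics from Lemma \ref{l1}, and the sufficiency localizes $|\varphi(z)|$ to the intervals $[r_{n},r_{n+1}]$ and uses the lower bound $n^{\alpha}H_{n,\alpha}(r_{n+1})\to(2\alpha/e)^{\alpha}$ to trade $n^{-\alpha}$ against $(1-|\varphi(z)|^{2})^{\alpha}|\varphi(z)|^{n-1}$. The only cosmetic difference is that you phrase the sufficiency as a pointwise bound on $(1-|z|^{2})^{\alpha}|\varphi'(z)|/(1-|\varphi(z)|^{2})^{\alpha}$ and then invoke the boundedness characterization, whereas the paper runs the identical estimate directly inside $\|C_{\varphi}f\|_{HB(\alpha)}$.
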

\begin{proof}
Let $n\in N$ and $z^{n}+\bar{z}^{n}$. By Theorem \ref{lem11} we have
\begin{align*}
\|z^{n}+\bar{z}^{n}\|_{HB(\alpha)}&=\max_{z\in D}2n |z|^{n-1}(1-|z|^2)^{\alpha}\\
&=2n(\frac{2\alpha}{n-1+2\alpha})^{\alpha}(\frac{n-1}{n-1+2\alpha})^{\frac{(n-1)}{2}},
\end{align*}
where the maximum is attained at any point on the circle with radius $r_{n}=(\frac{n-1}{n-1+2\alpha})^{\frac{(n-1)}{2}}$. Therefore
$$\lim_{n\rightarrow\infty}n^{\alpha-1}\|z^{n}+\bar{z}^{n}\|_{HB(\alpha)}=2(\frac{2\alpha}{e}).$$
 And so there exists a constant $K>0$, independent of$n$, such that $\|z^{n}+\bar{z}^{n}\|_{HB(\alpha)}\leq K n^{1-\alpha}$. For $f_{n}=\frac{z^{n}+\bar{z}^{n}}{\|z^{n}+\bar{z}^{n}\|_{HB(\alpha)}}$ we get that $|||f_{n}|||_{HB(\alpha)}=\|f_{n}\|_{HB(\alpha)}=1$. Hence we have
 \begin{align*}
 \infty&>\|C_{\varphi}\|\\
 &\geq\|C_{\varphi}f_{n}\|_{HB(\alpha)}\\
 &=\|f_{n}o\varphi\|_{HB(\alpha)}\\
 &\geq\frac{1}{K}n^{\alpha-1}\|\varphi^{n}+\bar{\varphi}^{n}\|_{HB(\alpha)}.
 \end{align*}
Conversely, by the assumption obviously we have $\|\varphi+\bar{\varphi}\|_{HB(\alpha)}<\infty$.  If $\sup_{z\in D}|\varphi(z)|<1$, then we can find $0<r<1$ such that $\sup_{z\in D}|\varphi(z)|<r$. In this case easily we get that $C_{\varphi}:HB(\alpha)\rightarrow HB(\alpha)$ is bounded. Now we assume that $\sup_{z\in D}|\varphi(z)|=1$ and put for any $n\geq1$
$$D_{n}=\{z\in D : r_{n}\leq |\varphi(z)|\leq r_{n+1}\},$$
where $r_{n}$ is given as before. Let $m$ be the smallest positive integer such that $D_{m}\neq\varnothing$. Since $\sup_{z\in D}|\varphi(z)|=1$, $D_{n}$ is not empty for every integer $n\geq m$, and $D=\bigcup_{n=m}^{\infty}D_{n}$. Then for every $n\geq m$ we conclude that
\begin{align*}
\min_{z\in D_{n}}2 n^{\alpha}|\varphi(z)|^{n-1}(1-|\varphi(z)|^{2})^{\alpha}&\geq2 n^{\alpha}H_{n,\alpha}(r_{n+1})\\
&=2(\frac{2\alpha n}{n+2\alpha})^{\alpha}(\frac{n}{n+2\alpha})^{\frac{(n-1)}{2}},
\end{align*}
where $H_{n,\alpha}(x)=x^{n-1}(1-x^2)^{\alpha}$. Thus
$$\lim_{n\rightarrow\infty}\min_{z\in D_{n}}2 n^{\alpha}|\varphi(z)|^{n-1}(1-|\varphi(z)|^{2})^{\alpha}\geq2(\frac{2\alpha}{e}).$$
Therefore, there exists $\delta>0$ such that, for any $n\geq m,$
$$\min_{z\in D_{n}}2 n^{\alpha}|\varphi(z)|^{n-1}(1-|\varphi(z)|^{2})^{\alpha}\geq\delta.$$
Also for every $f\in HB(\alpha)$ we have
\begin{align*}
\|C_{\varphi}f\|_{HB(\alpha)}&=\sup_{z\in D}(1-|z|^2)^\alpha |\varphi^{'}(z)|[|f_{z}(z)|+|f_{\bar{z}}(z)|]\\
&=\sup_{n\geq m}\sup_{z\in D_{n}}(1-|z|^2)^\alpha |\varphi^{'}(z)|[|f_{z}(z)|+|f_{\bar{z}}(z)|]\\
&\leq\frac{1}{\delta}\sup_{n\geq m}\sup_{z\in D_{n}}n^{\alpha-1}(1-|\varphi(z)|^{2})^{\alpha}[|f_{z}(z)|+|f_{\bar{z}}(z)|]
2|(\varphi^{n})^{'}(z)|(1-|z|^2)^\alpha\\
&\leq\frac{1}{\delta}\|f\|_{HB(\alpha)}\sup_{n\geq1}n^{\alpha-1}\|\varphi^{n}+\bar{\varphi}^{n}\|_{HB(\alpha)}.
\end{align*}
This implies that $C_{\varphi}$ is bounded on $HB(\alpha)$.
\end{proof}
It is well-known that the composition operator $C_{\varphi}$ is bounded on the Bloch space $HB$ for any analytic self-map $\varphi$ of $D$. Hence, if we let $\alpha=1$ in Theorem \ref{t15}, we get the following corollary.
\begin{cor}
For any analytic self-map $\varphi$ of the unit disk $D$,
$$\sup_{n\in N}\|\varphi^{n}+\bar{\varphi}^{n}\|_{HB}<\infty.$$
\end{cor}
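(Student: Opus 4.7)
The proof is a one-line application of Theorem \ref{t15}, so the plan is essentially just to verify that the two hypotheses line up. My approach would proceed as follows.

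First, I would specialize Theorem \ref{t15} to the parameter $\alpha = 1$. In this case $HB(\alpha) = HB$ is the harmonic Bloch space, and the weight $n^{\alpha-1}$ in the criterion collapses to $n^{0} = 1$. Consequently the theorem, applied with $\alpha = 1$, states exactly that $C_{\varphi} : HB \to HB$ is bounded if and only if
\begin{equation*}
\sup_{n \in \mathbb{N}} \|\varphi^{n} + \bar{\varphi}^{n}\|_{HB} < \infty .
\end{equation*}
So the corollary reduces to showing that the left-hand composition operator is always bounded on $HB$.

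Next, I would invoke the well-known fact, recalled in the paragraph preceding the corollary, that $C_{\varphi}$ is bounded on the harmonic Bloch space $HB$ for every analytic self-map $\varphi$ of the unit disk $D$. (This is the harmonic analogue of the classical Bloch-space result and follows from the Schwarz--Pick inequality $(1 - |z|^{2})|\varphi'(z)| \le 1 - |\varphi(z)|^{2}$ combined with the chain rule applied to the canonical decomposition $f = h + \bar g$.) Plugging this boundedness into the forward implication of Theorem \ref{t15} yields the claimed supremum bound.

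The only potential obstacle worth flagging is making sure that the "well-known" boundedness of $C_{\varphi}$ on $HB$ is genuinely available in our setting; if one were uneasy about citing it, one could prove it directly in two lines via Schwarz--Pick, since for $f = h + \bar g \in HB$,
\begin{equation*}
(1-|z|^{2})\bigl[|(f\circ\varphi)_{z}(z)| + |(f\circ\varphi)_{\bar z}(z)|\bigr] = (1-|z|^{2})|\varphi'(z)|\bigl[|h'(\varphi(z))| + |g'(\varphi(z))|\bigr] \le \|f\|_{HB}.
\end{equation*}
Beyond this, no further calculation is required, and the corollary follows immediately.
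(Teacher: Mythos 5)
Your proposal is correct and follows exactly the paper's argument: the paper likewise sets $\alpha=1$ in Theorem \ref{t15} (so the weight $n^{\alpha-1}$ becomes $1$) and invokes the well-known boundedness of $C_{\varphi}$ on the harmonic Bloch space. Your added Schwarz--Pick verification of that boundedness is a welcome extra detail the paper leaves as ``well-known,'' but it does not change the route.
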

\begin{thm}\label{t16}
Let $0< \alpha < \infty$ and $\varphi$ be an analytic self-map of the unit disk $D$. Then the essential norm of the composition operator $C_{\varphi}:HB(\alpha)\rightarrow HB(\alpha)$ is
$$\|C_{\varphi}\|_{e}=\frac{1}{2}(\frac{e}{2\alpha})^{\alpha}\limsup_{n\rightarrow\infty}n^{\alpha-1}\|\varphi^{n}+\bar{\varphi}^{n}\|_{HB(\alpha)}.$$
\end{thm}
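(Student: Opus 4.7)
The plan is to sandwich $\|C_\varphi\|_e$ between two matching expressions. I would combine the essential norm formula $\|C_\varphi\|_e = \lim_{s\to 1}\sup_{|\varphi(z)|>s}(1-|z|^2)^\alpha(1-|\varphi(z)|^2)^{-\alpha}|\varphi'(z)|$ supplied by Theorems \ref{t12} and \ref{t14} with two facts extracted from the earlier work: the norm identity $\|z^n+\bar z^n\|_{HB(\alpha)} = 2nH_{n,\alpha}(r_n)$ already computed in the proof of Theorem \ref{t12}, and the sharp asymptotics
\begin{align*}
\lim_{n\to\infty} n^\alpha H_{n,\alpha}(r_n) \;=\; \lim_{n\to\infty} n^\alpha H_{n,\alpha}(r_{n+1}) \;=\; \left(\tfrac{2\alpha}{e}\right)^\alpha
\end{align*}
which follow from Lemma \ref{l1}. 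These will allow me to translate between the $\sup_{|\varphi|>s}$ formulation and the $\varphi^n$ formulation with the correct constant.

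For the lower bound $\|C_\varphi\|_e \geq \tfrac{1}{2}(e/(2\alpha))^\alpha \limsup_n n^{\alpha-1}\|\varphi^n+\bar\varphi^n\|_{HB(\alpha)}$, I would recycle the unit-norm weakly null sequence $f_n = (z^n+\bar z^n)/\|z^n+\bar z^n\|_{HB(\alpha)}$ already introduced in the proof of Theorem \ref{t12}. Since compact operators send weakly null sequences to norm-null ones, the standard estimate $\|C_\varphi\|_e \geq \limsup_n \|C_\varphi f_n\|_{HB(\alpha)}$ applies. Writing $C_\varphi f_n = (\varphi^n+\bar\varphi^n)/(2nH_{n,\alpha}(r_n))$, multiplying numerator and denominator by $n^{\alpha-1}$, and inserting the limit $(2\alpha/e)^\alpha$ for $n^\alpha H_{n,\alpha}(r_n)$ gives the stated lower bound at once.

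For the reverse inequality, I would start from the upper bound in Theorem \ref{t14} and control the quotient $(1-|z|^2)^\alpha(1-|\varphi(z)|^2)^{-\alpha}|\varphi'(z)|$ by the harmonic power quantities. On each slice $D_n = \{z \in D : r_n\leq |\varphi(z)|\leq r_{n+1}\}$, Lemma \ref{l1}(c) supplies $H_{n,\alpha}(|\varphi(z)|) \geq H_{n,\alpha}(r_{n+1})$. Factoring $|\varphi(z)|^{n-1}(1-|z|^2)^\alpha|\varphi'(z)|$ as $H_{n,\alpha}(|\varphi(z)|)$ times the quotient leads to
\begin{align*}
n^{\alpha-1}\|\varphi^n+\bar\varphi^n\|_{HB(\alpha)} \;\geq\; 2\,n^\alpha H_{n,\alpha}(r_{n+1})\sup_{\varphi(z)\in D_n}\frac{(1-|z|^2)^\alpha|\varphi'(z)|}{(1-|\varphi(z)|^2)^\alpha}.
\end{align*}
Taking $\limsup_n$, applying the asymptotic, and invoking the identity $\lim_{s\to 1}\sup_{|\varphi(z)|>s}(\cdots) = \limsup_n \sup_{\varphi(z)\in D_n}(\cdots)$, which holds because $\{|\varphi|\geq r_n\} = \bigcup_{k\geq n} D_k$, then delivers the matching upper bound for $\|C_\varphi\|_e$. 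The step I expect to require the most care is this last reconciliation: the continuous limit in $s$ must be matched to the discrete $\limsup$ over the partition $\{D_n\}$, and the deviation between $n^\alpha H_{n,\alpha}(r_{n+1})$ and its limit $(2\alpha/e)^\alpha$ must be tracked explicitly so that the final constant emerges sharply rather than with slack.
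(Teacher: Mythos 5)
Your proposal is correct, and its lower-bound half (the weakly null sequence $f_n=(z^n+\bar z^n)/\|z^n+\bar z^n\|_{HB(\alpha)}$, the estimate $\|C_\varphi\|_e\geq\limsup_n\|C_\varphi f_n\|_{HB(\alpha)}$, and the asymptotic $n^\alpha H_{n,\alpha}(r_n)\to(2\alpha/e)^\alpha$) is exactly what the paper does. Where you genuinely diverge is the upper bound. The paper does \emph{not} route through Theorem \ref{t14}: it reruns the compact-approximation argument from scratch, estimating $\|C_\varphi(I-L_n)\|$ with the operators $L_n$ of Lemma \ref{l4}, splitting the supremum over $z\in D$ into the slices $D_k$ with $m\leq k\leq N-1$ (a term $J_1$ that vanishes by Lemma \ref{l4}(a)) and the tail $k\geq N$ (a term $J_2$), and then applying inside $J_2$ the same pointwise factorization by $k^\alpha|\varphi(z)|^{k-1}(1-|\varphi(z)|^2)^\alpha$ and the bound $H_{k,\alpha}(|\varphi(z)|)\geq H_{k,\alpha}(r_{k+1})$ on $D_k$ that you use, arriving at $J_2\leq[\tfrac{1}{2}(e/2\alpha)^\alpha+\varepsilon]\,\|I-L_n\|\sup_{k\geq N}k^{\alpha-1}\|\varphi^k+\bar\varphi^k\|_{HB(\alpha)}$. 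Your version instead treats Theorem \ref{t14} as a black box and proves the purely function-theoretic inequality
\begin{equation*}
\lim_{s\to1}\sup_{|\varphi(z)|>s}\frac{(1-|z|^2)^\alpha|\varphi'(z)|}{(1-|\varphi(z)|^2)^\alpha}\;\leq\;\limsup_{n\to\infty}\sup_{z\in D_n}\frac{(1-|z|^2)^\alpha|\varphi'(z)|}{(1-|\varphi(z)|^2)^\alpha}\;\leq\;\tfrac{1}{2}\bigl(\tfrac{e}{2\alpha}\bigr)^{\alpha}\limsup_{n\to\infty}n^{\alpha-1}\|\varphi^n+\bar\varphi^n\|_{HB(\alpha)},
\end{equation*}
which is shorter, avoids duplicating the $L_n$ machinery, and as a bonus establishes directly that the two essential-norm formulas of Theorems \ref{t14} and \ref{t16} coincide with each other, not merely that each equals $\|C_\varphi\|_e$; the paper's self-contained rerun buys nothing beyond independence from Theorem \ref{t14}. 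Two small points you should still dispatch explicitly: the degenerate case $\sup_{z\in D}|\varphi(z)|<1$, where the slices $D_n$ are eventually empty and both sides of the formula are zero because $C_\varphi$ is compact (the paper notes this), and the constant term $|\varphi(0)^n+\overline{\varphi(0)}^n|$ hidden in $\|\varphi^n+\bar\varphi^n\|_{HB(\alpha)}$, which only strengthens your slice inequality and vanishes after multiplication by $n^{\alpha-1}$, so it causes no harm.
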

\begin{proof}
Let $n\in N$ and $z^{n}+\bar{z}^{n}$. By Theorem \ref{t15} we have
\begin{align*}
\|z^{n}+\bar{z}^{n}\|_{HB(\alpha)}&=\max_{z\in D}2n |z|^{n-1}(1-|z|^2)^{\alpha}\\
&=2n(\frac{2\alpha}{n-1+2\alpha})^{\alpha}(\frac{n-1}{n-1+2\alpha})^{\frac{(n-1)}{2}},
\end{align*}
where the maximum is attained at any point on the circle with radius
$$r_{n}=(\frac{n-1}{n-1+2\alpha})^{\frac{(n-1)}{2}}.$$
Thus we have
$$\lim_{n\rightarrow\infty}n^{\alpha-1}\|z^{n}+\bar{z}^{n}\|_{HB(\alpha)}=2(\frac{2\alpha}{e}).$$
 Let $f_{n}=\frac{z^{n}+\bar{z}^{n}}{\|z^{n}+\bar{z}^{n}\|_{HB(\alpha)}}$ and so $\|f_{n}\|_{HB(\alpha)}=1$. Also easily we get that $\{f_{n}\}$ converges to $0$ weakly in $HB(\alpha)$. If $K$ is a compact operator on $HB(\alpha)$, then $\lim_{n\rightarrow\infty}\|Kf_{n}\|_{HB(\alpha)}=0$.
 Therefore
 \begin{align*}
 \|C_{\varphi}-K\|&\geq \limsup_{n\rightarrow\infty}\|(C_{\varphi}-K)f_{n}\|_{HB(\alpha)}\\
 &\geq\limsup_{n\rightarrow\infty}(\|(C_{\varphi})f_{n}\|_{HB(\alpha)}-\|(Kf_{n}\|_{HB(\alpha)})\\
 &=\limsup_{n\rightarrow\infty}\|(C_{\varphi})f_{n}\|_{HB(\alpha)}.
 \end{align*}
 By taking the $\inf$ on both sides of this inequality over all compact operator $K$, for $C_{\varphi}:HB(\alpha)\rightarrow HB(\alpha)$, we obtain the that
 \begin{align*}
 \|C_{\varphi}\|_{e}&=\inf_{K}\|C_{\varphi}-K\|\\
 &\geq\limsup_{n\rightarrow\infty}\|(C_{\varphi})f_{n}\|_{HB(\alpha)}\\
 &=\frac{1}{2}(\frac{e}{2\alpha})^{\alpha}\limsup_{n\rightarrow\infty}n^{\alpha-1}\|\varphi^{n}+\bar{\varphi}^{n}\|_{HB(\alpha)}.
 \end{align*}
  If
  $$\limsup_{n\rightarrow\infty}n^{\alpha-1}\|\varphi^{n}+\bar{\varphi}^{n}\|_{HB(\alpha)}=\infty,$$
   then $C_{\varphi}$ is not bounded on $HB(\alpha)$. Now suppose that
   $$\limsup_{n\rightarrow\infty}n^{\alpha-1}\|\varphi^{n}+\bar{\varphi}^{n}\|_{HB(\alpha)}<\infty.$$
Hence $C_{\varphi}$ is a bounded operator from $HB(\alpha)$ to $HB(\alpha)$ and, clearly,
$\|\varphi+\bar{\varphi}\|_{HB(\alpha)}<\infty$.
Notice that if $\sup_{Z\in D}|\varphi(z)|<1$, then $C_{\varphi}$ is compact and we have the conditions of theorem. Hence, in the sequel we assume that $\sup_{Z\in D}|\varphi(z)|=1$. Let $L_{n}$ be a sequence of operators as we used in the last results. Since each $L_{n}$ is compact as an operator on $HB(\alpha)$, so is $C_{\varphi}L_{n}$. And we have
\begin{align*}
\|C_{\varphi}\|_{e}&=\inf_{C_{\varphi}L_{n} is compact}\|C_{\varphi}-C_{\varphi}L_{n}\|\\
&\leq\limsup_{n\rightarrow\infty}\|C_{\varphi}-C_{\varphi}L_{n}\|\\
&\leq\limsup_{n\rightarrow\infty}\sup_{\|f\|_{HB(\alpha)}\leq1}|(I-L_{n})f(\varphi(0))|\\
&+\limsup_{n\rightarrow\infty}\sup_{\|f\|_{HB(\alpha)}\leq1}\sup_{z\in D}(1-|z|^2)^\alpha |\varphi^{'}(z)|
[|((I-L_{n})f)_{z}(\varphi(z))|+[|((I-L_{n})f)_{\bar{z}}(\varphi(z))|].
\end{align*}
Consequently, we have $$\limsup_{n\rightarrow\infty}\sup_{\|f\|_{HB(\alpha)}\leq1}|(I-L_{n})f(\varphi(0))|=0.$$
Now we need only consider the term
$$J=\sup_{\|f\|_{HB(\alpha)}\leq1}\sup_{z\in D}(1-|z|^2)^\alpha |\varphi^{'}(z)|
[|((I-L_{n})f)_{z}(\varphi(z))|+[|((I-L_{n})f)_{\bar{z}}(\varphi(z))|].$$
For every $n\geq1$ put
$$D_{n}=\{z\in D : r_{n}\leq |\varphi(z)|\leq r_{n+1}\},$$
where $r_{n}$ is as given in the Lemma 2.16. Let $m$ be the smallest positive integer such that $D_{m}\neq\varnothing$. Since $\sup_{z\in D}|\varphi(z)|=1$, $D_{n}$ is not empty for every integer $n\geq m$ and $D=\bigcup_{n=m}^{\infty}D_{n}$. Now we divide $J$ into two parts:
\begin{align*}
J&=\sup_{\|f\|_{HB(\alpha)}\leq1}\sup_{m\leq k\leq N-1}\sup_{z\in D_{k}}(1-|z|^2)^\alpha |\varphi^{'}(z)|[|((I-L_{n})f)_{z}(\varphi(z))|
+[|((I-L_{n})f)_{\bar{z}}(\varphi(z))|]\\
&+\sup_{\|f\|_{HB(\alpha)}\leq1}\sup_{k\geq N}\sup_{z\in D_{k}}(1-|z|^2)^\alpha |\varphi^{'}(z)|
[|((I-L_{n})f)_{z}(\varphi(z))|+[|((I-L_{n})f)_{\bar{z}}(\varphi(z))|]\\
&=J_{1}+J_{2}.
\end{align*}
For $J_{1}$ we have
\begin{align*}
\limsup_{n\rightarrow\infty}J_{1}&=\limsup_{n\rightarrow\infty}\sup_{\|f\|_{HB(\alpha)}\leq1}\sup_{m\leq k\leq N-1}\sup_{z\in D_{k}}(1-|z|^2)^\alpha
|\varphi^{'}(z)|[|((I-L_{n})f)_{z}(\varphi(z))|\\
&+[|((I-L_{n})f)_{\bar{z}}(\varphi(z))|]\\
&=\limsup_{n\rightarrow\infty}\sup_{\|f\|_{HB(\alpha)}\leq1}\sup_{r_{m}\leq|\varphi(z)|\leq r_{N-1}}[|((I-L_{n})f)_{z}(\varphi(z))|+[|((I-L_{n})f)_{\bar{z}}(\varphi(z))|]=0.
\end{align*}
Write the function under three supremum signs in $J_{2}$ as
$$\frac{(1-|z|^2)^\alpha |\varphi^{'}(z)|
[|((I-L_{n})f)_{z}(\varphi(z))|+[|((I-L_{n})f)_{\bar{z}}(\varphi(z))|k^{\alpha}|\varphi(z)|^{k-1}(1-|\varphi(z)|^{2})^{\alpha}}{k^{\alpha}|\varphi(z)|^{k-1}(1-|\varphi(z)|^{2})^{\alpha}}.$$
For $z\in D_{k}$,
$$2k^{\alpha}|\varphi(z)|^{k-1}(1-|\varphi(z)|^{2})^{\alpha}\geq2(\frac{2\alpha k}{k+2\alpha})^{\alpha}(\frac{k}{k+2\alpha})^{\frac{(k-1)}{2}}.$$
It is easy to see that
$$\lim_{n\rightarrow\infty}[2(\frac{2\alpha k}{k+2\alpha})^{\alpha}(\frac{k}{k+2\alpha})^{\frac{(k-1)}{2}}]^{-1}=\frac{1}{2}(\frac{e}{2\alpha})^{\alpha}.$$
Hence for every $\varepsilon>0$, we can choose $N>m+1$ large enough such that for any $k\geq N$,
$$[2(\frac{2\alpha k}{k+2\alpha})^{\alpha}(\frac{k}{k+2\alpha})^{\frac{(k-1)}{2}}]^{-1}<\frac{1}{2}(\frac{e}{2\alpha})^{\alpha}+\varepsilon.$$
For such $N$ we have
\begin{align*}
J_{2}&\leq[\frac{1}{2}(\frac{e}{2\alpha})^{\alpha}+\varepsilon]\sup_{\|f\|_{HB(\alpha)}\leq1}\|(I-L_{n})f\|_{HB(\alpha)}\sup_{k\geq N}\sup_{z\in D_{k}}2k^{\alpha-1}|(\varphi^{k})^{'}(z)|(1-|z|^2)^\alpha\\
&\leq[\frac{1}{2}(\frac{e}{2\alpha})^{\alpha}+\varepsilon]\|I-L_{n}\|\sup_{k\geq N}k^{\alpha-1}\|\varphi^{k}+\bar{\varphi}^{k}\|_{HB(\alpha)}.
\end{align*}
Thus by the Theorem \ref{t15} we obtain
$$\limsup_{k\rightarrow\infty}J_{2}\leq[\frac{1}{2}(\frac{e}{2\alpha})^{\alpha}+\varepsilon]\sup_{k\geq N}k^{\alpha-1}\|\varphi^{k}+\bar{\varphi}^{k}\|_{HB(\alpha)}.$$
Hence for any $N$ sufficiently large we have
\begin{align*}
\|C_{\varphi}\|_{e}\leq\limsup_{n\rightarrow\infty}J_{2}\leq[\frac{1}{2}(\frac{e}{2\alpha})^{\alpha}+\varepsilon]\sup_{k\geq N}k^{\alpha-1}\|\varphi^{k}+\bar{\varphi}^{k}\|_{HB(\alpha)}.
\end{align*}
Thus we have
$$\|C_{\varphi}\|_{e}\leq[\frac{1}{2}(\frac{e}{2\alpha})^{\alpha}+\varepsilon]\limsup_{k\rightarrow\infty}k^{\alpha-1}\|\varphi^{k}+\bar{\varphi}^{k}\|_{HB(\alpha)}.$$
Since $\varepsilon$ is an arbitrary positive number, therefore we get
$$\|C_{\varphi}\|_{e}\leq\frac{1}{2}(\frac{e}{2\alpha})^{\alpha}\limsup_{k\rightarrow\infty}k^{\alpha-1}\|\varphi^{k}+\bar{\varphi}^{k}\|_{HB(\alpha)}.$$
This completes the proof.
\end{proof}


\begin{thebibliography}{99}
\bibitem{abr} S. Axler, P. Bourdon and W. Ramey, Harmonic function theory, Graduate Texts in Mathematics {\bf 137}, springer, New York, (1992).
\bibitem{cf} F. Colonna, The Bloch constant of bounded harmonic mappings, Indiana Univ. Math. J., {\bf 38}, (1989), 829-840.
\bibitem{ch} M. Contreras and A. Hernandez-Diaz, Weithed composition operator in Weithed Banach spaces of analytic functions, J. Australian Math. Soc., {\bf 69}, (2000), 41-60.
\bibitem{cm} C. Cowen and B. MacCluer, composition operators on spaces of analytic functions, CRC Press, Boca Raton, (1995).
\bibitem{dr} R. Donaway, norm and essential norm estimates of composition operators on Besov type spaces, Thesis, University of Virginia, (1999).
\bibitem{dp} P. Duren, Harmonic Mapping in the Plane, Cambridge Univ. Press, (2004).
\bibitem{eee} S. Esmaeili, Y. Estaremi and A. Ebadian, Composition operators on the spaces of Harmonic Bloch function spaces, arXiv:1812.08979.
\bibitem{lz} Z. Lou, Composition operator on Bloch type spaces, Analysis 23(2003), No.1, 81-95.
\bibitem{mz} B. MacCluer and R. Zhao, essential norms of Weithed composition operators between Bloch-type spaces, Rocky Mountain J. Math., {\bf 33} (2003), 1437-1458.
\bibitem{mor} A. Montes-Rodriguez, The essential norm of a composition operator on Bloch spaces, Pacific J. Math. {\bf 188} (1999), 339-351.
\bibitem{mon} A. Montes-Rodriguez, Weithed composition operator on Weithed Banach spaces of analytic functions, J. London Math. Soc. {\bf 61} (2000), 872-884.
\bibitem{cp} P. Poggi-Corradini, The essential norms of composition operators revisited, Contemp. Math., {\bf 213}, (1998), 167-173.
\bibitem{sh} J. H. Shapiro, Composition operators and Classical Function Theory, Springer-Verlag, New York, (1993).
\bibitem{zr} R. Zhao, The essential norms of composition operators between Bloch-type spaces, Proc. Amer. Math. Soc. {\bf 7} (2010) 2537–2546.
\end{thebibliography}
\end{document}